\definecolor{myblue}{rgb}{0.0, 0.0, 1.0}
\definecolor{mygreen}{rgb}{0.01,0.75,0.20}
\newtheorem{theorem}{Theorem}[section]
\newtheorem{corollary}[theorem]{Corollary}
\newtheorem{proposition}[theorem]{Proposition}
\theoremstyle{definition}
\newtheorem{definition}[theorem]{Definition}
\newtheorem{remark}[theorem]{Remark}
\newtheorem{example}[theorem]{Example}
\newtheorem{application}[theorem]{Application}
\theoremstyle{definition}
\numberwithin{equation}{section}
\newcommand{\dy}{\,\mathrm{d}y}
\def\ga{\alpha}     \def\gb{\beta}       \def\gg{\gamma}
       \def\gd{\delta}     
                         \def\vge{\varepsilon}
\def\gf{\varphi}       \def\vgf{\varphi}    
            \def\gl{\lambda}
\def\gm{\mu}                 
\def\gs{\sigma}       
      \def\gw{\omega}
\def\Gg{\Gamma}     \def\Gd{\Delta}
\def\Gw{\Omega}              
\newcommand{\eat}[1]{}
\DeclarePairedDelimiter\norm{\lVert}{\rVert}%
\let\oldnorm\norm
\def\norm{\@ifstar{\oldnorm}{\oldnorm*}}
\newcommand{\om} {\omega}
\newcommand{\Om} {\Omega}
\newcommand{\la} {\lambda}
\newcommand\restr[2]{{% we make the whole thing an ordinary symbol
  \left.\kern-\nulldelimiterspace % automatically resize the bar with \right
  #1 % the function
 % \vphantom{\big|} % pretend it's a little taller at normal size
  \right|_{#2} % this is the delimiter
  }}
\def\w{{\widetilde w}}
\def\w2{{W^{1,2}_0(\Om)}}
\def\hh2{{H^1_0(\Om)}}
\def\C{{\mathcal C}}
\def\N{{\mathbb N}}
\def\F{{\mathcal F}}
\def\R{{\mathbb R}}
\def\({{\Big(}}
\def\){{\Big)}}
\def\ws2{{\F_{\frac{N}{2}}}}
\def\c1{{\C_c^1}}
\def\dy{{\rm d}y}
\newcommand\supp{\mathrm{supp}\,}
\newcommand{\Hmm}[1]{\leavevmode{\marginpar{\tiny%
			$\hbox to 0mm{\hspace*{-0.5mm}$\leftarrow$\hss}%
			\vcenter{\vrule depth 0.1mm height 0.1mm width \the\marginparwidth}%
			\hbox to
			0mm{\hss$\rightarrow$\hspace*{-0.5mm}}$\\\relax\raggedright #1}}}
\newcommand{\loc}{{\rm loc}}
\begin{document}
	\title[The Landis conjecture]{The Landis conjecture via Liouville comparison principle and criticality theory}
	
	%\iffalse
	
	\author {Ujjal Das}
	
	\address {Ujjal Das, Department of Mathematics, Technion - Israel Institute of
		Technology,   Haifa, Israel}
	
	\email {ujjaldas@campus.technion.ac.il, getujjaldas@gmail.com}
	\author{Yehuda Pinchover}
	\address{Yehuda Pinchover,
		Department of Mathematics, Technion - Israel Institute of
		Technology,   Haifa, Israel}
	\email{pincho@technion.ac.il}
	
	%\fi
	%%%%%%%%%%%%%%	
	\begin{abstract}
We give partial affirmative answers to Landis conjecture in all dimensions for two different types of  linear, second order, elliptic operators in a domain $\Omega\subset \mathbb{R}^N$. In particular, we provide a sharp decay criterion that ensures when a solution of a nonnegative Schr\"odinger equation in $\mathbb{R}^N$ with a potential $V\leq 1$ is trivial. Moreover, we address the analogue of Landis conjecture for quasilinear problems. Our approach relies on the application of Liouville comparison principles and criticality theory.

		\medskip
		\noindent  2020  \! {\em Mathematics  Subject  Classification.}
		Primary 35J10; Secondary 35B09, 35B53, 35B60.\\[-3mm]

		\noindent {\em Keywords:} Agmon ground state, Landis Conjecture, Liouville comparison principle, unique continuation at infinity.
	\end{abstract}
\maketitle
\section{Introduction}
Let $V$ be a bounded function defined on a domain $\Gw$ in $\R^N$, $N\in \N$, where $\Om$ is either $\R^N$ or an exterior domain, i.e., an unbounded domain with a nonempty compact boundary. The Landis conjecture \cite{Landis} claims that if a bounded function $u$ 
solves the Schr\"odinger equation
\begin{equation}\label{eq_Sch}
H [\varphi] :=(-\Gd  + V)[\varphi] = 0 \qquad \mbox{in } \Gw,
\end{equation}
and decays faster than $\mathrm{e}^{-k|x|}$, for some
$k>  \sqrt{\|V\|_{L^{\infty}(\Om)}}$ , then $u = 0$. Certainly, the conjecture is sharp in dimension one, since a potential $V$ which equals to a constant $c\in\R$ outside a compact set, admits an exponential decaying solution only if $c=k^2>0$ and in this case such a solution decays precisely as $\mathrm{e}^{-|k||x|}$. On the other hand,
 Meshkov \cite{Meshkov} disproved the conjecture for a complex-valued potential $V$, by constructing a nontrivial, complex-valued, bounded solution $u$ of \eqref{eq_Sch} in $\Gw=\R^2$ which decays as $\mathrm{e}^{-c|x|^{{4}/{3}}}$ for some $c>0$. Moreover, Meshkov showed that the exponent $4/3$ is sharp in the sense that if $u = o(\mathrm{e}^{-|x|^{{4}/{3}+\vge}})$ for some $\vge>0$ as $|x| \rightarrow \infty$, then $u = 0$. Although, due to Meshkov's result, the Landis conjecture is settled for complex-valued bounded potentials $V$ and complex-valued bounded solutions $u$, it still remains open in the real-valued case.  The study of Landis conjecture for real-valued case started gaining attention since the work of Bourgain and Kenig \cite{BK}, where using a {\it{Carleman-type inequality}} the authors  derived that  for bounded normalized solution $u$ with $|u(0)| = 1$
 \begin{align} \label{Eq:Quan_Landis}
 \inf_{|x|=R} \|u\|_{L^{\infty}(B_1(x))} \geq c \mathrm{e}^{-cR^{{4}/{3}}\log R} \, \ \text{for some} \ c>0 \ \text{and} \ R >> 1 \,,
 \end{align}
  where $c$ is independent of $R$.
Consequently, if a solution $u$ of \eqref{eq_Sch} decays as   $o(\mathrm{e}^{-c|x|^{{4}/{3}}\log |x|})$, then $u = 0$. Note that this improves Meshkov's result in the real-valued case by allowing a slower decay of $u$ than that of Meshkov (which is precisely, $\mathrm{e}^{-|x|^{{4}/{3}+\vge}}$) for the validity of Landis conjecture; yet, this decay is faster than what Landis originally proposed. 

This led to the study of a weaker version of Landis conjecture in the real-valued case with $V\in L^\infty(\R^N)$, i.e., to examine whether  the condition $u=o(\mathrm{e}^{-|x|^{1+\varepsilon}})$ implies $u = 0$ \cite[Question 1]{Kenig}.  
In $\R^2$, for bounded potential $V \geq 0$ (which of course implies that $-\Gd  + V\geq 0$), Kenig-Silvestre-Wang \cite{KSW} improved the quantitative estimate  \eqref{Eq:Quan_Landis} with $\mathrm{e}^{-cR\log R}$ in the right-hand side of \eqref{Eq:Quan_Landis}, and hence, proved the Landis conjecture with a further slower decay rate than that of Bourgain-Kenig. In order to derive this improved quantitative estimate, instead of Carleman-type inequality, they exploited the nonnegativity of $H$, and in particular, they used a {\it{three-ball inequality}} derived from the {\it{Hadamard three-circle theorem}}. Note that they considered a more general operator than $H$ by allowing a drift term. Utilizing the core idea in \cite{KSW} with appropriate modifications, a series of results on the weak Landis conjecture in $\R^2$ have been obtained for more general second-order elliptic operators with a bounded potential $V$ which is allowed to be sign-changing, for instance, see \cite{Davey3,Davey1,Davey2, KW_MRL} and the references therein. Another approach can be found in \cite{Balch,Davey6,LMNN}, where the  authors improved the quantitative estimate \eqref{Eq:Quan_Landis} when $\Om=\R^2$ using quasiconformal mappings and the nodal structure of the solution $u$. However, both of the above two techniques are difficult to extend in higher dimensions.  In fact, there are only a handful of results that address the Landis conjecture in its original form in higher dimensions (cf. \cite{ABG,Rossi,Sirakov}).

More precisely, in \cite{Rossi}, Rossi used ODE techniques to treat the one-dimensional case and used it to prove the Landis conjecture for radially symmetric, second order, linear, uniformly elliptic operator $\mathcal{L}$ in nondivergence form with bounded coefficients, and with a bounded potential $V(x)=\tilde V(|x|)$ in an exterior domain in $\R^N$.  In addition, using two different approaches: one by probabilistic tools \cite{ABG} and another by the comparison principle \cite{Sirakov}, the Landis conjecture in higher dimensions is proved for a general second-order uniformly elliptic operator, where in \cite{ABG} it is assumed that the coefficients are bounded, while in \cite{Sirakov} this assumption is relaxed.  The obtained decay rates under which the Landis conjecture is proved in these articles depend on the coefficients of the operator and the uniform ellipticity constant, and they are not sharp in general. In this context, it is important to mention that the assumptions in both articles \cite{ABG,Sirakov} ensure that  the generalized principal eigenvalue of the associated operator $\mathcal{L}$ is nonnegative, namely, $\mathcal{L} \geq 0$ (see Definition~\ref{def_nonneg}).

We also refer the reader to Landis-type results for the time-dependent Schr\"odinger equation and the heat equation \cite{EKPV10,EKPV16}. For developments in the discrete setting see \cite{DKP,FRS,FV,JLMP,LM} and the references therein.

\medskip 

 In this article, we obtain new partial affirmative answers to Landis conjecture in all dimensions for two different types of linear elliptic operators in a domain $\Gw\subset \R^N$. Moreover, we address the analogue of Landis conjecture for quasilinear problem as well, see Section~\ref{Sec-Quasi}. The novelty of our approach to proving the Landis conjecture relies on the application of Liouville comparison principles (Theorem~\ref{LC}) for {\em nonnegative} Schr\"odinger operators, and criticality theory for general {\em nonnegative} second order elliptic operators of the divergence form \eqref{Eq:Ellp_op}.  
 
 The first illustration of our result is the case of a nonnegative Schr\"odinger operator 
 \begin{align} \label{Eq:Def_Schordinger}
 H:=-\Gd +V \ \ \mbox{in} \ \ \R^N \,,
\end{align}  where $N \geq 1$, $V\leq 1$, and $V\in L^{q}_\loc(\R^N)$, ${q}>N/2$ if $N\geq 2$ and $q=1$ if $N=1$. Note that we allow the potential $V$ to be unbounded from below. 
 \begin{theorem}\label{Thm:Schrodinger}
Let $H$ be  a nonnegative Schr\"odinger operator in $\R^N$ as given in \eqref{Eq:Def_Schordinger}. 
If $u \in W^{1,2}_{\loc}(\R^N)$ is a solution of the equation $H[\varphi]=0$ in $\R^N$ satisfying   
\begin{equation}\label{Eq:cond_Thm:Schrodinger}	
|u(x)| = \begin{cases}
O(1) & N=1,\\
O\left(|x|^{(2-N)/2}\right)  &N\geq2,
\end{cases} 
\ \mbox{ as } |x|\to\infty, \ \mbox{ and }\  \liminf_{|x|\to \infty} \frac{|u(x)||x|^{(N-1)/2}}{\mathrm{e}^{-|x|}} =0 \,,
\end{equation}
then $u=0$.   
\end{theorem}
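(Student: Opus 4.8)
The plan is to exploit the nonnegativity of $H$ together with criticality theory and the Liouville comparison principle (Theorem~\ref{LC}). Since $H = -\Delta + V \geq 0$ in $\R^N$ with $V \leq 1$, the comparison operator $H_1 := -\Delta + 1$ is a positive operator; indeed $H_1 = (-\Delta + 1) \geq -\Delta + V = H \geq 0$ in a pointwise sense on potentials, and $-\Delta+1$ is subcritical in $\R^N$ for every $N\geq 1$ (its ground state / positive minimal solution behaves like the modified Bessel kernel, which for large $|x|$ decays as $|x|^{(1-N)/2}\mathrm{e}^{-|x|}$). The first step is therefore to record that $-\Delta+1$ has a positive (super)solution $\Psi$ in $\R^N$ with the precise decay $\Psi(x) \asymp |x|^{(1-N)/2}\mathrm{e}^{-|x|}$ as $|x|\to\infty$, and that it is of \emph{positive type} (subcritical) in the terminology of criticality theory; when $N=1$ the relevant positive solution is simply $\mathrm{e}^{-|x|}$, consistent with the $O(1)$ hypothesis in \eqref{Eq:cond_Thm:Schrodinger}.

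The second step is to set up the dichotomy for the nonnegative operator $H$: either $H$ is critical in $\R^N$, in which case it has a unique (up to scalar) positive global solution, the Agmon ground state $\phi_0$, which moreover is a null-state and has the property that any solution controlled near infinity by $\phi_0$ must be a multiple of $\phi_0$; or $H$ is subcritical, in which case it possesses a positive Green function and a positive solution of minimal growth at infinity. In either case I want to feed the hypothesis on $u$ into the Liouville comparison principle. The key point is the decay assumption \eqref{Eq:cond_Thm:Schrodinger}: the first condition $|u(x)| = O(|x|^{(2-N)/2})$ (resp.\ $O(1)$) ensures that $u$ does not grow, while the $\liminf$ condition $\liminf_{|x|\to\infty} |u(x)||x|^{(N-1)/2}\mathrm{e}^{|x|} = 0$ says exactly that along a sequence $|x_j|\to\infty$, $|u|$ decays strictly faster than the positive solution $\Psi$ of the \emph{positive} comparison operator $-\Delta+1$. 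Thus on one hand $u$ is a solution of the nonnegative equation $H[\varphi]=0$, and on the other hand $-\Delta + 1$ is a positive operator dominating $H$ from above, possessing the positive solution $\Psi$; the decay of $u$ beats $\Psi$ along a sequence going to infinity.

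The third step is the actual application of Theorem~\ref{LC}: take $P_0 := H$ (a nonnegative operator, with solution $u$) and $P_1 := -\Delta + 1$ (a positive operator, with positive solution $\Psi$). The hypothesis $V \leq 1$ gives the required inequality $P_1 \geq P_0$, i.e.\ $\int (|\nabla \psi|^2 + |\psi|^2) \geq \int (|\nabla\psi|^2 + V|\psi|^2)$ for all test functions $\psi$. One must check the growth/decay side condition of the Liouville comparison principle, namely that $|u|_+$ (or $u$ suitably normalized) is, up to the sequence along which the $\liminf$ vanishes, bounded by a constant multiple of $\Psi$ near infinity — this is where the first part of \eqref{Eq:cond_Thm:Schrodinger} and the sharp asymptotics $\Psi \asymp |x|^{(1-N)/2}\mathrm{e}^{-|x|}$ enter, since $|x|^{(2-N)/2}$ is far larger than $\Psi$, so $u$ is trivially dominated; and the $\liminf = 0$ supplies the strict-decay-along-a-sequence needed to conclude not merely $u = c\Psi$ but $u \equiv 0$. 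Theorem~\ref{LC} then forces $u = 0$, because $P_1$ is \emph{subcritical} (positive type): a nonnegative operator comparison against a subcritical operator with a solution decaying faster than the minimal positive solution of the dominating operator leaves no room for a nonzero solution.

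The main obstacle, and the step requiring the most care, is matching the precise exponents: one must verify that the positive solution of $-\Delta+1$ in $\R^N$ (the Bessel–Macdonald kernel $K_{(N-2)/2}(|x|)|x|^{(2-N)/2}$ up to normalization) has exactly the asymptotics $|x|^{(1-N)/2}\mathrm{e}^{-|x|}$, so that the weight $|x|^{(N-1)/2}\mathrm{e}^{|x|}$ appearing in \eqref{Eq:cond_Thm:Schrodinger} is genuinely the reciprocal of this solution, and then to confirm that the Liouville comparison principle as stated in Theorem~\ref{LC} applies with $u$ only assumed in $W^{1,2}_{\loc}$ and with $V$ merely in $L^q_{\loc}$, $q > N/2$, allowing $V$ unbounded below — i.e.\ that the regularity hypotheses of Theorem~\ref{LC} are met. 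A secondary technical point is the case $N=2$, where $|x|^{(2-N)/2} = O(1)$ and the first condition in \eqref{Eq:cond_Thm:Schrodinger} only gives boundedness of $u$; one checks that boundedness still suffices since $\Psi$ decays exponentially, so $u/\Psi$ may blow up polynomially at worst, still within the reach of the comparison principle once the $\liminf$ kills the constant.
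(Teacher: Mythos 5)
Your proposal contains a genuine gap: you have the roles of the critical and subcritical operators in the Liouville comparison principle reversed, and as a result the principle does not apply as you invoke it. Theorem~\ref{LC} requires the \emph{comparison} operator $H_1$ to be \emph{critical} in $\R^N$ with Agmon ground state $\Psi_{H_1}$, and requires the pointwise domination $(\Phi_2^+)^2 A_2\leq C\Psi_{H_1}^2A_1$. You take $H_1=-\Delta+1$, which is subcritical in every dimension, so hypothesis (i) fails outright; moreover, even setting that aside, hypothesis (iii) would force $|u|\leq C|x|^{(1-N)/2}\mathrm{e}^{-|x|}$ \emph{everywhere} near infinity, whereas the theorem only assumes $|u|=O(|x|^{(2-N)/2})$ globally together with decay along a single sequence. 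Your remark that ``$|x|^{(2-N)/2}$ is far larger than $\Psi$, so $u$ is trivially dominated'' has the inequality backwards: being $O$ of something much larger than $\Psi$ gives no domination by $\Psi$ at all. The conclusion of Theorem~\ref{LC} is also not ``$u=0$'' but rather ``$H_2$ is critical and $\Phi_2$ is its ground state,'' so a further argument is needed in any case.

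The missing idea is the choice of comparison operator. The correct application (and the one that explains the exponent $(2-N)/2$ in \eqref{Eq:cond_Thm:Schrodinger}) takes $H_1$ to be the \emph{critical Hardy operator}: $H_1=-\Delta$ for $N=1,2$ (critical, with ground state $\Psi_{H_1}=1$), and $H_1=-\Delta-W_{\mathrm{cri}}$ with $W_{\mathrm{cri}}=\bigl(\tfrac{N-2}{2}\bigr)^2|x|^{-2}$ near infinity for $N\geq3$ (critical, with $\Psi_{H_1}\asymp|x|^{(2-N)/2}$). Then the first condition in \eqref{Eq:cond_Thm:Schrodinger} is exactly hypothesis (iii), and Theorem~\ref{LC} with $H_2=H$ yields that $H$ is critical and $u=u^+>0$ is its Agmon ground state. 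Only at this second stage does $-\Delta+1$ enter: since $V\leq 1$, the positive function $u$ is a supersolution of $(-\Delta+1)[\varphi]=0$ outside a compact set, hence $u\geq C\,\mathcal{G}_{-\Delta+1}\asymp C|x|^{(1-N)/2}\mathrm{e}^{-|x|}$ near infinity by minimality of growth of the Green function. This lower bound contradicts $\liminf_{|x|\to\infty}|u(x)||x|^{(N-1)/2}\mathrm{e}^{|x|}=0$, forcing $u=0$. Your proposal correctly identifies the asymptotics of the Bessel-type solutions of $-\Delta+1$ and the general flavour of the argument, but without the critical Hardy operator as $H_1$ and without the positivity of $u$ obtained from the comparison principle, the chain of implications does not close.
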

We emphasize that the above result not only proves the Landis conjecture in any dimension
for nonnegative $H$ with potential $V$ which can be unbounded from below, but also the theorem assumes
a slower decay rate of $u$ than the one Landis originally proposed in his conjecture. It is noteworthy that the theorem does not necessitate the exponential decay of $u$ at infinity in $\R^N$; it is enough to have a sequence $(x_n)$ with $|x_n| \to\infty$ such that $(u(x_n))$ decays faster than $\mathrm{e}^{-|x_n|}/|x_n|^{\frac{N-1}{2}}$. Furthermore, our result is sharp, as the following example demonstrates. 
\begin{example}\label{ex1}
 Let $\mathcal{G}_{H_1}$ be a minimal positive Green function of the operator $H_1:= -\Gd+1$ in $\R^N$ having a singularity at the origin. It is well known (see the Appendix and for more details \cite{Norio_Shimakura}) that 
$\mathcal{G}_{H_1}(x) \asymp 
|x|^{(1-N)/2}\,\mathrm{e}^{-|x|}$ for  $|x|\geq 1 $. 
Let $0\lneq W\in C^\infty_c(\R^N)$ and consider the {\em generalized principal eigenvalue} of $H_1$
$$\gl_0=\gl_0(H_1,W,\R^N):=\sup\{\gl\in\R \mid H_1-\gl W\geq 0 \ \mbox{ on }  C^\infty_c(\R^N)\}. $$
Then $(H_1-\gl_0 W)[\varphi]=0$ in $\R^N$ admits a positive solution which is an Agmon ground state $\Psi$.  It follows that $\Psi(x) \asymp \mathcal{G}_{H_1}(x) \asymp |x|^{(1-N)/2}\mathrm{e}^{-|x|}$ for $|x|\geq 1$.
\end{example}
%\begin{example}\label{ex1}
% Let $\mathcal{G}_{H_1}$ be a minimal positive Green function of the operator $H_1:= -\Gd+1$ in $\R^N$. It is well known (see the Appendix and for more details \cite{Norio_Shimakura}) that 
%$$\mathcal{G}_1(x,y)\asymp \begin{cases}
%|x-y|^{(1-N)/2}\,\mathrm{e}^{-|x-y|} & \quad |x-y|\geq 1, \\[4mm]
%
%     \begin{cases} -\log|x-y| & N=2,\\
%     |x-y|^{2-N} &  N\geq 3,
%     \end{cases}
%     & \quad |x-y|\leq 1. 
%\end{cases}$$ 
%Let $0\lneq W\in C^\infty_c(\R^N)$ and consider the {\em generalized principal eigenvalue}
%$$\gl_0=\gl_0(H_1,W,\R^N):=\sup\{\gl\in\R \mid H_1-\gl W\geq 0 \ \mbox{ on }  C^\infty_c(\R^N)\}. $$
%Then $H_1-\gl_0 W$ on $\R^N$ admits an Agmon ground state $\Psi$ that satisfies $\Psi(x) \asymp |x|^{(1-N)/2}\mathrm{e}^{-|x|}$ for all $|x|\geq 1$.    
%\end{example}
If one looks a little deeper into Theorem \ref{Thm:Schrodinger} to understand how the functions $|x|^{(1-N)/2}\mathrm{e}^{-|x|}$ and $|x|^{(2-N)/2}$ in \eqref{Eq:cond_Thm:Schrodinger} are related to the operator $H$, it can be realized that the asymptotics at infinity of these functions are same  as positive solutions of minimal growth at infinity (see Definition~\ref{def:minimalgrowth}) of the equations $(-\Delta +1)[\varphi]=0$, and $(-\Delta -W_{\mathrm{\mathrm{opt}}})[\varphi]=0$, respectively, where 
$W_{\mathrm{opt}}$ is an optimal Hardy-weight of $-\Delta$ in $\R^N$, in the sense of \cite{DFP}. Precisely, $W_{\mathrm{opt}}(x)\asymp (N-2)^2|x|^{-2}/4$ for $N\geq 3$ (in the subcritical case) and  $W_{\mathrm{opt}}=0$ for $N=1,2$ (in the critical case).  

This crucial observation as mentioned above, leads us to extend Theorem \ref{Thm:Schrodinger} to nonnegative  Schr\"odinger-type operators (i.e., with variable coefficients) in a general domain $\Om$ in $\R^N$. 
Let $A_k:\Omega
	\rightarrow \mathbb{R}^{N^2}$; $k=1,2$ be two symmetric  matrix valued functions such that 	{$$ C_K^{-1} I_N\le A_k\le C_K I_N \quad \mbox{in any} \ K\Subset \Om  $$  for some $C_K>0$ depending on $K$.  Consider the following two operators on $\Om$	 
	\begin{equation}\label{eq_Hk}
	H_k :=P_k+V_k= -\mathrm{div}(A_k\nabla ) +V_k\, , \qquad  k=1,2,
	\end{equation}
		where $V_k\in L^q_\loc(\Om)$ with $q>N/2$ if $N\geq 2$ and $q=1$ if $N=1$.
	Then we have the following answer to Landis conjecture (we will call such a result {\em Landis-type theorem}):
\begin{theorem}\label{Thm:Gen_Schrodinger} 
Let $\Om$ be a domain in $\R^N$, $N\geq 1$, and	 consider two nonnegative  Schr\"odinger-type operators $H_k$ on $\Om$ as in \eqref{eq_Hk}.  Let $W \in L^{q}_{\loc}(\Om)$ ($q$ as above) satisfies $W \geq V_2$ outside a compact set in $\Om$.
  
	Suppose that $H_1$ is critical in $\Gw$ with an Agmon ground state $\Psi_{H_1}$. Let  $u \in W^{1,2}_{\loc}(\Om)$ be a solution of the equation $H_2 [\varphi]=0$ in $\Om$  satisfying   
\begin{equation}	
|u|^2A_2\leq C\Psi_{H_1}^2A_1 \  \mbox{ in } \Gw \ \ \mbox{ and } \ \  \liminf_{x\to \bar{\infty}} \frac{|u(x)|}{G_{P_2+W}(x)} =0 \nonumber\,,
\end{equation}
where $G_{P_2+W}$ is a positive solution of minimal growth of $(P_2 + W)[\varphi]=0$ in a neighbourhood of infinity in $\Om$, and $\bar \infty$ is the ideal point in the one-point compactification of $\Gw$.
Then $u = 0$.   
\end{theorem}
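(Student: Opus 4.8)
The plan is to combine the Liouville comparison principle (Theorem~\ref{LC}) with the uniqueness and minimal–growth properties of Agmon ground states supplied by criticality theory. I would argue by contradiction, assuming $u\not\equiv 0$, and break the argument into three steps.

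First I would show that $u$ is, up to sign and a positive constant, the ground state of $H_2$. Since $u$ solves $H_2[\varphi]=0$ and $H_2$ is in divergence form, Kato's inequality shows that $|u|$ is a nonnegative subsolution of $H_2$ in $\Om$, while by hypothesis $|u|^2A_2\le C\Psi_{H_1}^2A_1$ in $\Om$. Thus the data $(H_1,H_2,|u|)$ satisfy the hypotheses of Theorem~\ref{LC}: $H_1$ is critical in $\Om$ with ground state $\Psi_{H_1}$, $H_2\ge 0$ in $\Om$, and $|u|$ is a subsolution of $H_2$ dominated by $\Psi_{H_1}$ in the leading–coefficient–weighted sense. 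Hence Theorem~\ref{LC} yields that $H_2$ is critical in $\Om$ and that $|u|$ is a positive solution of $H_2[\varphi]=0$; since the Agmon ground state is the unique positive solution (up to a multiplicative constant) of a critical equation, $|u|=c\,\Psi_{H_2}$ for some $c>0$, so $u$ is zero-free and $u=\pm c\,\Psi_{H_2}$.

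Next I would compare $\Psi_{H_2}$ with $G_{P_2+W}$ near infinity. Fix a neighbourhood of infinity $\Om^*$ (with $\Om\setminus\Om^*$ compact) on which both $W\ge V_2$ holds and $(P_2+W)[G_{P_2+W}]=0$. From $H_2[\Psi_{H_2}]=0$ in $\Om$ I obtain, in $\Om^*$,
\[
(P_2+W)[\Psi_{H_2}]=H_2[\Psi_{H_2}]+(W-V_2)\,\Psi_{H_2}=(W-V_2)\,\Psi_{H_2}\ge 0,
\]
so $\Psi_{H_2}$ is a positive supersolution of $P_2+W$ in $\Om^*$. Because $G_{P_2+W}$ is a positive solution of $(P_2+W)[\varphi]=0$ of minimal growth in a neighbourhood of infinity (Definition~\ref{def:minimalgrowth}), after normalizing so that $G_{P_2+W}\le C'\Psi_{H_2}$ on the inner boundary of a slightly smaller neighbourhood of infinity, the defining property of minimal growth propagates this inequality and gives $G_{P_2+W}\le C'\,\Psi_{H_2}$ in a neighbourhood of $\bar\infty$. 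Combining this with Step~1, $|u|=c\,\Psi_{H_2}\ge (c/C')\,G_{P_2+W}$ near $\bar\infty$, whence $\liminf_{x\to\bar\infty}|u(x)|/G_{P_2+W}(x)\ge c/C'>0$, contradicting the hypothesis; therefore $u\equiv 0$.

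The routine ingredients here are Kato's inequality and the normalization in the minimal–growth comparison. I expect the delicate point to be the first step: one must be sure that the matrix inequality $|u|^2A_2\le C\Psi_{H_1}^2A_1$ is exactly the domination required by Theorem~\ref{LC} when the leading coefficients $A_1$ and $A_2$ differ, and that a subsolution of a critical operator which is dominated by its ground state is in fact a constant multiple of that ground state (and not merely $O(\Psi_{H_2})$). It is precisely this equality — rather than just an upper bound — that produces the lower bound $|u|\gtrsim\Psi_{H_2}$ near infinity needed to contradict the $\liminf$ condition in the last step.
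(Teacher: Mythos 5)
Your proposal is correct and follows essentially the same route as the paper's proof: apply the Liouville comparison principle to conclude that $H_2$ is critical and $u$ is (up to sign and a positive constant) its Agmon ground state, then observe that this positive solution is a supersolution of $(P_2+W)[\varphi]=0$ near infinity, so minimal growth gives $|u|\geq C\,G_{P_2+W}$ there, contradicting the $\liminf$ hypothesis. The only cosmetic difference is that the paper applies Theorem~\ref{LC} directly with $\Phi_2=u$ (assuming without loss of generality that $u^+\neq 0$), whereas you take $\Phi_2=|u|$ via Kato's inequality.
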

%%%%%%%%%%%%%%%%%%%%%%%

\medskip

The above Landis-type theorem for symmetric operators leads us to extensions of such a result to the nonsymmetric case of linear second-order elliptic operators with real coefficients of the following divergence form:
\begin{align} \label{Eq:Ellp_op}
	\mathcal{L}[\varphi]:= -\mathrm{div} \left[\big(A(x)\nabla \varphi +  \varphi \tilde{b}(x) \big) \right]  +
	b(x)\cdot\nabla \varphi   +c(x) \varphi, \qquad x\in \Gw, 
\end{align}
where $A(x):=(a_{ij}(x))_{N \times N}$ is a symmetric, locally uniformly elliptic matrix, $a_{ij}\in L^\infty_\loc(\Gw)$, $b_i,\,\tilde{b_i} \in L^{\tilde q}_{\mathrm{loc}}(\Omega)$, $\tilde q > N$, $c \in L^{q}_{\mathrm{loc}}(\Omega)$, $ q >N/2$, and $\Gw\subset \R^N$ is a domain. We will consider {\em weak (super)solutions} of $\mathcal{L}[\varphi]=0$ (see Definition~\ref{def_weak}). We say that $\mathcal{L}$ is {\em nonnegative in} $\Gw$ (in short $\mathcal{L}\geq 0$) if the equation $\mathcal{L}[\varphi]=0$ admits a positive (super)solution in $\Gw$. 

In general, Landis conjecture may not hold for such an operator,  see for example \cite[Theorem~2]{Plis}. Nevertheless, under the above restrictions on the coefficients and the crucial assumption $\mathcal{L}\geq 0$, we establish the following Landis-type theorem. This result is in the spirit of the refined maximum principle in \cite[Section~4]{P99}. 
\begin{theorem}\label{thm_element}
Given an elliptic operator $\mathcal{L}$ of the form \eqref{Eq:Ellp_op} with coefficients satisfying the above regularity conditions, we assume that $\mathcal{L}\geq 0$ in $\Gw$. Let $u \in W^{1,2}_{\loc}(\Om)$ be a solution of the equation
$\mathcal{L}[\varphi]=0$ in $\Gw$ such that $|u|=O(G_{\mathcal{L}})$ as $x\to \bar{\infty}$, where $G_{\mathcal{L}}$ is a positive solution of the equation
$\mathcal{L}[\varphi]=0$ of minimal growth at infinity in $\Gw$. Then either $u=0$, or $\mathcal{L}$ is critical in $\Gw$ and  $u$ is (up to a multiplicative constant) the Agmon ground state of $\mathcal{L}$.   
\end{theorem}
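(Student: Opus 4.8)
The plan is to reduce Theorem~\ref{thm_element} to the symmetric Liouville comparison machinery by means of a suitable ground-state transform, and then to exploit the characterization of criticality via positive solutions of minimal growth. First I would recall the standard device that transforms the nonsymmetric operator $\mathcal{L}$ of the form \eqref{Eq:Ellp_op} into a symmetric Schr\"odinger-type operator: since $\mathcal{L}\geq 0$ in $\Gw$, the equation $\mathcal{L}[\varphi]=0$ admits a positive (super)solution; moreover, it is known (criticality theory for such operators, cf.\ the references in the paper) that $\mathcal{L}\geq 0$ admits a positive \emph{solution} $v$, and that the ground-state transform $w\mapsto w/v$ conjugates $\mathcal{L}$ (up to the weight $v^2$) to a symmetric operator of the divergence form $-\mathrm{div}(\tilde A \nabla\,)$, i.e.\ with trivial zero-order term, where $\tilde A = v^2 A$ acting on the new unknown. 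Concretely, writing $u = v\phi$ for the given solution $u$ of $\mathcal{L}[\varphi]=0$, the function $\phi$ satisfies the symmetric equation $P_0[\psi]:=-\mathrm{div}(v^2 A \nabla \psi)=0$ in $\Gw$ in the weak sense, and $P_0$ is critical in $\Gw$ if and only if $\mathcal{L}$ is critical in $\Gw$, with ground state $1$ for $P_0$ corresponding to the Agmon ground state $v$ of $\mathcal{L}$.

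Next I would translate the hypotheses. The positive solution $G_{\mathcal{L}}$ of minimal growth at infinity for $\mathcal{L}$ corresponds, under the same transform, to a positive solution $g:=G_{\mathcal{L}}/v$ of minimal growth at infinity for $P_0$ near $\bar\infty$ in $\Gw$. The decay hypothesis $|u| = O(G_{\mathcal{L}})$ becomes $|\phi| = O(g)$ as $x\to\bar\infty$. Now I am in the setting of a symmetric, nonnegative operator $P_0 = -\mathrm{div}(v^2 A\nabla\,)$ (which is $H_2$ of \eqref{eq_Hk} with $V_2\equiv 0$, $A_2 = v^2 A$), and I want to apply Theorem~\ref{Thm:Gen_Schrodinger} with $H_1 := P_0$ restricted appropriately — but $P_0$ need not be critical. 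To handle both the critical and subcritical cases I would argue as follows. If $P_0$ (equivalently $\mathcal{L}$) is subcritical in $\Gw$, then it has a positive minimal Green function, and a positive solution of minimal growth at infinity $g$ in a neighborhood of $\bar\infty$ is, by Liouville comparison for the subcritical case / the boundary Harnack-type control at infinity, dominated in such a way that $|\phi| = O(g)$ forces $\phi$ to lie below a global positive solution tending to zero, and a Phragm\'en--Lindel\"off / Liouville comparison argument (the heart of criticality theory: in the subcritical case the only global solution dominated by a solution of minimal growth at infinity and satisfying the growth bound at $\bar\infty$ must vanish) yields $\phi\equiv 0$, hence $u\equiv 0$. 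If instead $P_0$ is critical, then $1$ is its Agmon ground state, global positive solutions are unique up to scalar multiple, $g$ is itself comparable to $1$ near $\bar\infty$, and the bound $|\phi| = O(g)$ together with the Liouville comparison principle for critical operators forces $\phi$ to be a scalar multiple of the ground state $1$; transforming back, $u$ is (up to a constant) the Agmon ground state $v$ of $\mathcal{L}$, which is the second alternative in the statement.

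The main obstacle, and the step requiring the most care, is the justification that in the subcritical case the growth bound $|\phi| = O(g)$ at $\bar\infty$ genuinely forces $\phi\equiv 0$ — i.e.\ a Liouville-type comparison theorem for the symmetric divergence-form operator $P_0$ controlling a global solution by a positive solution of minimal growth \emph{at infinity only}, with no sign assumption on $\phi$. The delicate points are: (i) the solution of minimal growth $g$ is defined only in a neighborhood of $\bar\infty$, so one must first extend or compare it near the compact "core" of $\Gw$ using the generalized maximum principle valid for $P_0\geq 0$, controlling $|\phi|$ on a large compact exhaustion; (ii) one then splits $\phi = \phi^+ - \phi^-$ or applies the comparison to $g\mp\varepsilon\phi$ for the appropriate sign and lets $\varepsilon\to 0$ after a limiting argument on an exhaustion $\Om_n\uparrow\Gw$, using that $g\pm$ (small multiple of $\phi$) is a supersolution of minimal growth and invoking the characterization "$P_0$ subcritical $\iff$ there is no nonzero global subsolution dominated by a solution of minimal growth at infinity"; (iii) regularity: since $u\in W^{1,2}_{\loc}$ and the coefficients are only in the stated $L^q_{\loc}$ classes, all comparisons must be carried out for weak (super/sub)solutions, so I would lean on the local boundedness and Harnack inequality available under exactly these integrability hypotheses to pass freely between pointwise and weak formulations. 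Once this subcritical Liouville step is in place, the critical case is essentially immediate from the uniqueness of the Agmon ground state, and undoing the ground-state transform completes the proof.
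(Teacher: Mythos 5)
There is a genuine gap, and it sits at the very first step. Your reduction rests on the claim that the ground-state transform $u=v\phi$ (with $v>0$ a solution of $\mathcal{L}[\varphi]=0$) conjugates $\mathcal{L}$ to a \emph{symmetric} operator $-\mathrm{div}(v^2A\nabla\,)$. That is only true when $\mathcal{L}$ is (quasi-)symmetric, e.g.\ when $b=\tilde b$. For a general operator of the form \eqref{Eq:Ellp_op} the transform removes the zero-order term but leaves a first-order term proportional to $v^{2}(b-\tilde b)\cdot\nabla\phi$, so the transformed operator is still nonselfadjoint and you cannot invoke Theorem~\ref{LC} or Theorem~\ref{Thm:Gen_Schrodinger}, which are stated only for Schr\"odinger-type (driftless) operators. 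Since Theorem~\ref{thm_element} is precisely the extension of the Landis-type result to the nonselfadjoint case, this step defeats the purpose of the theorem rather than proving it. A second, related gap: in the subcritical case you invoke a characterization ``subcritical $\iff$ no nonzero global subsolution dominated by a solution of minimal growth at infinity'' as if it were known; it is not among the equivalences of Theorem~\ref{Thm:char_cri}, and it is essentially equivalent to the statement being proved, so citing it is circular.

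The paper's proof avoids all of this and is much more elementary: it is the maximal $\vge$-argument applied directly to $\mathcal{L}$. Assuming $u^+\neq 0$, one notes (Remark~\ref{Rmk:subsol}) that $u^+$ is a subsolution, picks any positive supersolution $\Phi$ of $\mathcal{L}[\varphi]=0$, and observes that the hypothesis $|u|=O(G_{\mathcal{L}})$ together with the minimality of $G_{\mathcal{L}}$ (which gives $\Phi\geq cG_{\mathcal{L}}$ near $\bar\infty$) and local boundedness/Harnack on the compact core yields $u^+\leq C\Phi$ in $\Gw$; hence $\Phi_\vge:=\Phi-\vge u^+$ is a positive supersolution for small $\vge$. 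Taking the maximal such $\vge_M<\infty$ and applying the strong maximum principle to the nonnegative supersolution $\Phi_{\vge_M}$ forces either a contradiction with maximality (hence $u=0$) or $\Phi=\vge_M u^+$, in which case every positive supersolution is a solution and a multiple of $u$, so $\mathcal{L}$ is critical and $u$ is its Agmon ground state. If you want to salvage your outline, replace the symmetrization step by this direct comparison; the only facts needed are the weak Harnack inequality, the subsolution property of $u^+$, and the definition of minimal growth.
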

We use the {\it{maximal $\vge$-argument}} to prove the above result.  Observe that, if we additionally assume $\liminf_{x \rightarrow \bar{\infty}} (|u(x)|/G_{\mathcal{L}}(x))=0$ in the above theorem, then the second possible assertion cannot occur. Hence, it follows that $u =0$ in $\Om$.

In a similar spirit as in the preceding theorem, we prove the following variant of Landis-type theorem associated to the operator $\mathcal{L}+V$ under certain conditions on $V$.
\begin{theorem}\label{thm_element1}
	Consider a nonnegative elliptic operator  $\mathcal{L}+V$ in $\Gw$, where  $\mathcal{L}$ is of of the form \eqref{Eq:Ellp_op} with coefficients satisfying the aforementioned regularity conditions and $V\in L^{q}_{\mathrm{loc}}(\Omega)$ for some $q > N/2$. Let $0\leq W \in L^{q}_{\mathrm{loc}}(\Omega)$, $q>N/2$, be a critical Hardy-weight for $\mathcal{L}$ with Agmon ground state $\Psi_{\mathcal{L}-W}$, and suppose that  $V+W\geq 0$ in $\Gw$. 
	
	Let $u$ be a solution of the equation
	$(\mathcal{L}+V)[\varphi]=0$ in $\Gw$ such that 
	$$|u|=O(\Psi_{\mathcal{L}-W}) \ \mbox{ as} \ \ x\to \bar{\infty} \ \  \mbox{ and }  \ \ \liminf_{x\to \bar{\infty}} \frac{|u(x)|}{\Psi_{\mathcal{L}-W}(x)} =0.$$
	 Then $u=0$.
\end{theorem}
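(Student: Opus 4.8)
The plan is to reduce Theorem~\ref{thm_element1} to Theorem~\ref{thm_element} applied to the critical operator $\mathcal{L}-W$. The key point is that $W$ being a critical Hardy-weight for $\mathcal{L}$ means precisely that $\mathcal{L}-W$ is critical in $\Gw$ with Agmon ground state $\Psi_{\mathcal{L}-W}$, and hence (by criticality theory, recalled earlier in the paper) the ground state $\Psi_{\mathcal{L}-W}$ is, up to a constant, the unique positive solution of minimal growth at infinity of $(\mathcal{L}-W)[\varphi]=0$; in particular we may take $G_{\mathcal{L}-W}=\Psi_{\mathcal{L}-W}$.

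\textbf{Main steps.} First, I would write $\mathcal{L}+V=(\mathcal{L}-W)+(V+W)$ and observe that by hypothesis $V+W\geq 0$ in $\Gw$, so $(\mathcal{L}+V)[\varphi]=0$ is, relative to the base operator $\mathcal{L}-W$, an equation with a \emph{nonnegative} added potential. Thus $u$ solves $(\mathcal{L}-W)[\varphi]=-(V+W)\varphi$, i.e., $u$ satisfies $(\mathcal{L}-W)[u]=-(V+W)u\le 0$ where $u\ge 0$, so $|u|$ (or $u^+$ and $u^-$ separately) is a subsolution-type object for $\mathcal{L}-W$; more cleanly, a solution of $(\mathcal{L}+V)[\varphi]=0$ with $V+W\ge0$ is a supersolution of $(\mathcal{L}-W)[\varphi]=0$ when nonnegative. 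The correct move is to apply Theorem~\ref{thm_element} (or rather the argument behind it — the maximal $\vge$-argument) directly to $\mathcal{L}-W$, using that $|u|=O(\Psi_{\mathcal{L}-W})=O(G_{\mathcal{L}-W})$ as $x\to\bar\infty$: that theorem then forces either $u=0$, or $\mathcal{L}-W$ critical (which it is) and $u$ a constant multiple of $\Psi_{\mathcal{L}-W}$. The second alternative would give $|u(x)|/\Psi_{\mathcal{L}-W}(x)\equiv |c|>0$, contradicting the hypothesis $\liminf_{x\to\bar\infty}|u(x)|/\Psi_{\mathcal{L}-W}(x)=0$ unless $c=0$. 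Hence $u=0$.

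\textbf{The subtle point.} The genuine obstacle is that Theorem~\ref{thm_element} is stated for solutions of $\mathcal{L}[\varphi]=0$, whereas here $u$ solves $(\mathcal{L}-W)[u]=-(V+W)u$, which is not homogeneous unless $V+W\equiv0$. So I cannot cite Theorem~\ref{thm_element} verbatim; I must re-run its proof. Concretely: set $\Psi:=\Psi_{\mathcal{L}-W}$ and consider, for $\vge>0$, the function $w_\vge:=\Psi - \vge^{-1}|u|$ (or work with $u^\pm$). Using $(\mathcal{L}-W)[\Psi]=0$, $|u|=O(\Psi)$, and the $\liminf$ condition to locate a sequence $x_n\to\bar\infty$ along which $|u(x_n)|/\Psi(x_n)\to0$, one shows that for suitable large $\vge$ the function $v:=\Psi-\vge|u|$ is a nonnegative supersolution of $\mathcal{L}-W$ near infinity that does not vanish identically; the crucial gain from $V+W\ge0$ is that $(\mathcal{L}-W)[\vge|u|]=\mathrm{sgn}(u)\,\vge(\mathcal{L}-W)[u]=-\vge(V+W)|u|\le0$ wherever $u\neq0$, i.e. $\vge|u|$ is a subsolution of $\mathcal{L}-W$, so $v$ is a genuine supersolution. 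Then Theorem~\ref{thm_element}'s mechanism — pushing $\vge$ to its maximal admissible value and invoking the strong maximum principle / uniqueness of minimal positive solutions together with criticality of $\mathcal{L}-W$ — yields that either $u\equiv0$ or $|u|$ is a positive multiple of $\Psi$; the latter is excluded by the $\liminf$ hypothesis. I would present this as: apply Theorem~\ref{thm_element} to the operator $\mathcal{L}-W$ noting that a nonnegative solution of $(\mathcal{L}+V)[\varphi]=0$ with $V+W\ge0$ is a supersolution of $(\mathcal{L}-W)[\varphi]=0$, and that the proof of Theorem~\ref{thm_element} only uses the supersolution property of $|u|$ together with $|u|=O(G_{\mathcal{L}-W})$. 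The decay of $\Psi$ being of minimal growth at infinity is what makes the comparison go through at $\bar\infty$.
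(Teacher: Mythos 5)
Your argument is correct and is essentially the paper's own proof: form $\Phi_\vge=\Psi_{\mathcal{L}-W}-\vge u^{+}$, use $V+W\ge 0$ to check that $\Phi_\vge$ is a supersolution of the critical operator $\mathcal{L}-W$, conclude from criticality that $u^{+}$ is a positive multiple of $\Psi_{\mathcal{L}-W}$, and contradict the $\liminf$ hypothesis. Two small corrections to the write-up. First, you twice assert that a nonnegative solution of $(\mathcal{L}+V)[\varphi]=0$ is a \emph{supersolution} of $(\mathcal{L}-W)[\varphi]=0$; since $(\mathcal{L}-W)[u]=-(V+W)u\le 0$ when $u\ge 0$, it is a \emph{subsolution} --- which is what your own computation shows and what the argument actually needs (so that $\Psi_{\mathcal{L}-W}-\vge u^{+}$ is a supersolution); likewise the pointwise manipulation with $\mathrm{sgn}(u)$ is not legitimate in the weak setting and should be replaced by the fact that $u^{+}$ of a subsolution is a subsolution (Remark~\ref{Rmk:subsol}). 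Second, the maximal-$\vge$ machinery is not needed here: since $\mathcal{L}-W$ is critical, Theorem~\ref{Thm:char_cri}\,(4) gives that \emph{every} positive supersolution is a constant multiple of the ground state, so a single small $\vge>0$ already yields $u^{+}=C\Psi_{\mathcal{L}-W}$, which the $\liminf$ condition excludes.
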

%%%%%%%%%%%%%%
In comparison with the Landis-type theorems in \cite{ABG}, we see that the above two theorems relax the boundedness assumption of the coefficients of the elliptic operators and the restriction on the domain. In addition, while comparing with \cite[Theorem 1.2]{Sirakov}, one should note that we relax the boundedness assumption on $(a_{ij})$. It is remarkable that the theorems \ref{thm_element} and \ref{thm_element1} provide a slower (and in some cases even optimal) decay of $u$ than the one  prescribed in \cite{ABG,Sirakov} for Landis-type theorem to hold for certain operators $\mathcal{L}$, see Application \ref{app-4_3}.

It follows from the above two theorems that to conclude $u=0$, it suffices to find good lower bounds for $G_{\mathcal{L}}$ and $\Psi_{\mathcal{L}-W}$, respectively. Several possible ways to get such estimates for these functions are presented in sections~\ref{sec_3} and \ref{sec_general}. It is natural to anticipate that such lower  estimates  may help us to prove Landis conjecture under a slower decay assumption on $u$ than the decay Landis proposed. We show in sections~\ref{sec_3} and \ref{sec_general} that this is indeed possible for some $\mathcal{L}$.
In fact, we highlight that under certain restrictions, the Landis-type theorem holds on $\R^N$ even under the assumption that $u$ decays only polynomially, i.e., $u(x)=O(1/|P(|x|)|)$ as $x\to \infty$, where $P$ is a polynomial, see for example  Application~\ref{app_elem}.  

The paper is structured as follows. In the following section, basic notions and some results from criticality theory are stated. In Section~\ref{sec_3}, we prove the Landis-type theorems for Schr\"odinger-type operators (theorems~\ref{Thm:Gen_Schrodinger} and \ref{Thm:Schrodinger}), and present various applications of them. Section~\ref{sec_general} is dedicated to the proofs of theorems~\ref{thm_element} and \ref{thm_element1} which are extensions of the Landis-type theorem for Schr\"odinger operators (Theorem~\ref{Thm:Gen_Schrodinger}) to the case of nonselfadjoint operators. This section also covers several applications of these theorems. Section~\ref{Sec-Quasi} deals with the quasilinear case. Using a Liouville comparison principle for the $(p,A)$-Laplace operator with a potential \cite{PP,PTT}, we prove a Landis-type theorem for such operators.  Finally, in Section~\ref{sec-6}, we discuss the limitation of our results, namely, our crucial assumption that the operators considered throughout the paper are nonnegative.
%%%%%%%%%%%%%%%%%
\section{Preliminaries}\label{sec-prelim}
This section is devoted to some basic notions and important results in criticality theory that are essential for this article. In particular, we recall a Liouville comparison principle for Schr\"odinger-type operators.

We use the following notation and conventions:

\begin{itemize}
\item All the functions considered in this article are real-valued measurable functions.
\item     $\mathbb{S}^{N-1}$ denotes the unit sphere in $\R^N$.
	\item For two subsets $\om_1,\om_2$, we write $\om_1 \Subset \om_2$ if  $\overline{\om_1}$ is a compact set, and $\overline{\om_1}\subset \om_2$.
	\item  For two positive measurable functions defined in $\gw \subset\R^N$, we write  $g_1\asymp g_2$ in
	$\gw$ if 	$C^{-1}g_{2}(x)\leq g_{1}(x) \leq Cg_{2}(x)$ for a.e.  $x\in \gw$, where $C$ is a positive constant.	
	\item For any real-valued measurable function $f$, we denote
	$f^+:=\max\{0,f\}$.
	\item For a subspace $X(\om)$ of measurable functions on $\om$, 
	$X_c(\om) :=\{f \!\in \! X(\om)\!\mid\! \supp f \!\Subset\! \om\}$.  
	\item The matrix inequality $A\leq B$ means that $B-A$ is a nonnegative definite matrix.
	\item By $I_N$ we denote the $N$-dimensional identity matrix.
	\item For a domain $\Om$ in $\R^N$, we denote the distance function to the boundary $\partial \Om$ by $\mathrm{d}_{\Om}$.
	\item Let $\bar \infty$  be an ideal point, throughout the paper, we consider the one-point compactification $\hat\Gw=\Gw\cup\{\bar \infty\}$ of the domain $\Gw$. So,  if $(x_n)\subset \Om$, then  $x_n \rightarrow \bar \infty$ if and only if for any $K \Subset \Om$ there exist $N_K \in \N$ such that $x_n \in \Om \setminus K$ for all $n \geq N_K$. 
	\item For two functions $ f,g :\Om \to \R $,  we write $f = O(g)$ as $x \rightarrow \bar{\infty}$  if 
\begin{align*}
	  \limsup_{x\to \bar\infty}\frac{|f(x)|}{|g(x)|}<\infty.
\end{align*}
If the above limsup is zero, then we simply write $f = o(g)$.
\end{itemize}

In this article we consider second order elliptic operators $\mathcal{L}$ in the divergence form \eqref{Eq:Ellp_op}. If $\tilde b_i=b_i =0$ for all $1\leq i \leq N$, then we denote such an operator by $H$ and call it a {\em Schr\"odinger-type operator}. The (sub/super) solution of the equation $\mathcal{L}[\varphi]=0$ in $\Om$ is considered in the weak sense as defined below.  
%%%%%%%%%%%%%%%%%%%%%%%%%%
\begin{definition}[Weak (sub/super) solution] \label{def_weak} 
	Given an  elliptic operator $\mathcal L$ of the form \eqref{Eq:Ellp_op},  we say that $u\in W^{1,2}_{\loc}(\Gw)$ is a  {\em (weak) solution} (resp., {\em supersolution)} of the  equation  $\mathcal{L}[\vgf]=0$ in $\Gw$, if for any (resp., nonnegative) $\phi \in C_c^{\infty}( \Gw )$   we have
	\begin{equation*}\label{weak_solution}
		\int_{\Omega}\left[\sum_{i=1}^N\left((\sum_{j=1}^N a^{ij}\partial_j u)+u \tilde{b}^i \right)\partial_i \phi+(\sum_{i=1}^Nb^i \partial_i u+cu )\phi \right] \!\mathrm{d}x  = 0\ \ 
	(\text{resp.,} \geq 0).
	\end{equation*} 
	In this case we write $\mathcal L[u]=0$ (resp., $\mathcal L [u] \geq 0$) in $\Om$.  Furthermore, $u$ is a (weak) {\em subsolution} of $\mathcal L[\varphi]=0$  in $\Gw$ if $-u$ is a supersolution of this equation in $\Gw$.
\end{definition}

\begin{remark} \label{Rmk:subsol}
The positive part $u^+$ of a (sub)solution $u$ of the equation $\mathcal{L}[\varphi]=0$ in $\Om$ is a subsolution of the same equation \cite[Lemma~2.7]{Ag}. Also, by the {\it{weak Harnack inequality}}, the {\it{strong maximum principle}}  holds true, i.e., any nontrivial, nonnegative supersolution is a.e. strictly positive \cite[Theorem~8.18]{GT}. Moreover, such supersolutions are locally bounded. 
\end{remark}
%%%%%%%%%%%%%%%%%%%%%%%%%%%%%%%%%
\begin{definition}[{Minimal growth \cite{Ag}}]\label{def:minimalgrowth}
	Let $\mathcal{L}$ be an elliptic operator of the form \eqref{Eq:Ellp_op}. A function $u$ is said to be a {\em positive solution of the equation $\mathcal{L}[\varphi]=0$ of minimal growth at $\bar \infty$}  if for some $K\Subset\Gw$,  $u \in W^{1,2}_{\loc}(\Om\setminus K)$, $u$  is a positive solution of the equation $\mathcal{L}[\varphi]=0$ in $\Gw\setminus K$, and for any  positive supersolution $v$ of $\mathcal{L}[\varphi]=0$ in a subdomain $\Gw\setminus K_1$ with $K\Subset K_1\Subset \Gw$ such that   $K_1$ is a Lipschitz bounded domain in $\Om$,  the inequality $u \leq v$ on $\partial K_1$ implies  $u \leq v$ in $\Gw\setminus K_1$. We will denote such a solution by $G_{\mathcal{L}}$.
	
	Fix $y\in \Gw$. A function $\mathcal{G}_{\mathcal{L}}(\cdot,y) \in W^{1,2}_\loc(\Gw\setminus\{y\})\cap L^1_\loc(\Gw)$ is called a {\it{minimal positive Green function}} of $\mathcal{L}$ in $\Gw$ with singularity at $y$, if $\mathcal{G}_{\mathcal{L}}(\cdot,y)$ is a positive solution of the equation $\mathcal{L}[\varphi]=0$ in $\Gw\setminus\{y\}$ which has a minimal growth at $\bar\infty$ in $\Gw$, and satisfies the equation $\mathcal{L}[\mathcal{G}_{\mathcal{L}}](\cdot,y)=\gd_y$ in the distributional sense, where $\gd_y$ is the Dirac measure with a charge at $y$. With a slight abuse of notation, we will write the function $\mathcal{G}_{\mathcal{L}}(\cdot,y)$ as $\mathcal{G}_{\mathcal{L}}(\cdot)$ with singularity at $y$. We may skip specifying the singularity $y$ if $y$ is a generic point in $\Gw$. 
	
	A positive solution of $\mathcal{L}[\vgf]=0$ in $\Gw$ which has a minimal growth at $\bar\infty$ is called an {\em Agmon ground state} of $\mathcal{L}$ in $\Gw$, and it will be denoted by  $\Psi_{\mathcal{L}}$.
\end{definition}
\begin{definition}[Nonnegativity of operators]\label{def_nonneg}
An elliptic operator $\mathcal{L}$ of the form \eqref{Eq:Ellp_op} is {\em nonnegative in} $\Gw$ (in short, $\mathcal{L} \geq 0$) if the equation $\mathcal{L}[\vgf]=0$ admits a positive (super)solution in $\Gw$.

	Let  $\mathcal L\geq 0$ in $\Gw$, and $0\leq w \in L^{q}_\loc(\Om)$, $q>N/2$. The {\em generalized (weighted) principal eigenvalue}  is given by
	$$\gl_0(\mathcal{L},w,\Gw):= \sup\{\gl \mid \mathcal{L}-\gl w\geq 0 \ \mbox{ in } \Gw\}.$$
\end{definition}
\begin{definition}[(Sub)critical operators]
A nonnegative operator $\mathcal{L}$ of the form \eqref{Eq:Ellp_op} is said to be {\em subcritical} in $\Gw$ if there exists $W\gneq 0$, $W\in L_\loc^{q}(\Gw)$, with $q>N/2$ such that   $\mathcal{L}-W\geq0$ in $\Gw$. Such a function $W$ is called a {\em Hardy-weight} for $\mathcal{L}$ in $\Gw$.  The operator $\mathcal{L}$ is {\em critical} in $\Gw$ if $\mathcal{L}\geq 0$ in $\Gw$, but $\mathcal{L}$ is not subcritical in $\Gw$.
\end{definition}

\begin{definition}[Critical Hardy-weights]
A Hardy-weight $W$ of an operator  $\mathcal{L}$ of the form \eqref{Eq:Ellp_op} is said to be a {\em critical Hardy-weight} for $\mathcal{L}$  in $\Gw$ if $\mathcal{L}-W$ is critical in $\Gw$.
\end{definition}
\begin{remark}
	For some possible ways to produce critical Hardy-weights, see \cite{DFP,Murata1,P_JDE1}.
\end{remark}

\begin{theorem}[Characterization of (sub)criticality \cite{P07}] \label{Thm:char_cri}
Let $\mathcal{L}\geq 0$ be an elliptic operator of the form \eqref{Eq:Ellp_op}. The following assertions are equivalent:
	\begin{enumerate}
\item $\mathcal{L}$ is  subcritical (resp., critical) in $\Gw$.
\item $\mathcal{L}$ admits (resp., does not admit) a minimal positive Green function with singularity at some  $y\in\Gw$.  
\item $\mathcal{L}$ admits (resp., does not admit) two linearly independent positive supersolutions.   
\item $\mathcal{L}$ admits (resp., does not admit) a positive supersolution which is not a solution.
\item $\mathcal{L}$ does not admit (resp., admits) an Agmon ground state.
\end{enumerate}
\end{theorem}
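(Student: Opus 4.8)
The plan is to run a \emph{maximal $\varepsilon$-argument} comparing the positive part $u^+$ with the Agmon ground state $\Psi:=\Psi_{\mathcal{L}-W}$, after first converting the equation for $u$ into a differential inequality for $u^+$ with respect to the \emph{critical} operator $\mathcal{L}-W$. The key algebraic observation is that $\mathcal{L}+V=(\mathcal{L}-W)+(V+W)$ with $V+W\ge 0$, so a solution of $(\mathcal{L}+V)[\varphi]=0$ should produce subsolutions of $(\mathcal{L}-W)[\varphi]=0$.

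First I would show that $u^+$ is a nonnegative subsolution of $(\mathcal{L}-W)[\varphi]=0$ in $\Gw$. Since $u$ solves $(\mathcal{L}+V)[\varphi]=0$ it is, in particular, a subsolution, and by Remark~\ref{Rmk:subsol} so is $u^+$; testing against a nonnegative $\phi\in C_c^\infty(\Gw)$ and subtracting the nonnegative quantity $\int_\Gw(V+W)u^+\phi\dx$ from the weak inequality $\langle(\mathcal{L}+V)[u^+],\phi\rangle\le 0$ gives
\[
\langle(\mathcal{L}-W)[u^+],\phi\rangle=\langle(\mathcal{L}+V)[u^+],\phi\rangle-\int_\Gw(V+W)u^+\phi\dx\le 0,
\]
the integral being finite because $u^+$ is locally bounded and $V+W\in L^q_\loc(\Gw)$. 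By linearity $-u$ also solves $(\mathcal{L}+V)[\varphi]=0$, so $u^-=(-u)^+$ is likewise a nonnegative subsolution of $(\mathcal{L}-W)[\varphi]=0$.

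Next I would set up and resolve the $\varepsilon$-argument. Because $\mathcal{L}-W$ is critical, $\Psi$ is a globally defined, continuous, strictly positive solution of $(\mathcal{L}-W)[\varphi]=0$ in $\Gw$ which, by Theorem~\ref{Thm:char_cri}, is its unique positive supersolution up to a positive multiple. Using $|u|=O(\Psi)$ together with the continuity and positivity of $\Psi$, the set $S:=\{\varepsilon>0\mid u^+\le\varepsilon\Psi \text{ in }\Gw\}$ is nonempty; put $\varepsilon^*:=\inf S\in[0,\infty)$, so that $u^+\le\varepsilon^*\Psi$ in $\Gw$ by continuity. It suffices to prove $\varepsilon^*=0$, for then $u^+\le\varepsilon\Psi$ for all $\varepsilon>0$ forces $u^+\equiv 0$, i.e.\ $u\le 0$ in $\Gw$, and repeating the argument for $-u$ gives $u\ge 0$, hence $u=0$. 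Suppose instead $\varepsilon^*>0$, and set $g:=\varepsilon^*\Psi-u^+\ge 0$ in $\Gw$. As the difference of the solution $\varepsilon^*\Psi$ and the subsolution $u^+$, $g$ is a nonnegative supersolution of $(\mathcal{L}-W)[\varphi]=0$ on the connected set $\Gw$, so by the strong maximum principle (Remark~\ref{Rmk:subsol}) either $g\equiv 0$ or $g>0$ in $\Gw$; in the latter case criticality forces $g$ to be a solution, hence $g=c\Psi$ for some $c>0$. Either way $u^+=\alpha\Psi$ in $\Gw$ with $\alpha\ge 0$ ($\alpha=\varepsilon^*$ if $g\equiv 0$, $\alpha=\varepsilon^*-c$ if $g>0$). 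If $\alpha>0$, then along a sequence $x_n\to\bar\infty$ realizing $\liminf_{x\to\bar\infty}|u(x)|/\Psi(x)=0$ we get $|u(x_n)|/\Psi(x_n)\ge u^+(x_n)/\Psi(x_n)=\alpha>0$, a contradiction; and if $\alpha=0$ then $u^+\equiv 0$, which forces $\varepsilon^*=0$, contradicting $\varepsilon^*>0$. Hence $\varepsilon^*=0$, and the theorem follows.

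The main obstacle is Step~1: recognizing that it is $u^+$ itself, rather than $u$, that satisfies a differential inequality for the \emph{critical} operator $\mathcal{L}-W$ --- this is where the hypothesis $V+W\ge 0$ is used, and it is what makes a comparison with $\Psi$ possible at all. After that, the proof is driven entirely by the rigidity of critical operators in Theorem~\ref{Thm:char_cri}; it is precisely this rigidity (a critical operator has, up to scalars, exactly one positive supersolution, and it is a solution) that lets the comparatively weak hypothesis $\liminf|u|/\Psi=0$ --- as opposed to $u=o(\Psi)$ --- close the argument. I also note that the standing assumption $\mathcal{L}+V\ge 0$ is automatic here, since $\Psi$ is a positive supersolution of $(\mathcal{L}+V)[\varphi]=0$.
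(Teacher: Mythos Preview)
Your proof does not address the stated theorem at all. Theorem~\ref{Thm:char_cri} is the characterization of (sub)criticality: it asserts the equivalence of five conditions (existence of a Hardy-weight, of a minimal Green function, of two linearly independent positive supersolutions, of a supersolution that is not a solution, and (non)existence of an Agmon ground state) for a nonnegative operator $\mathcal{L}$. In the paper this result is not proved; it is quoted from \cite{P07} as background. What you have written is instead a proof of Theorem~\ref{thm_element1}: you assume a critical Hardy-weight $W$ with ground state $\Psi_{\mathcal{L}-W}$, a potential $V$ with $V+W\ge 0$, and a solution $u$ of $(\mathcal{L}+V)[\varphi]=0$ satisfying $|u|=O(\Psi_{\mathcal{L}-W})$ and $\liminf_{x\to\bar\infty}|u|/\Psi_{\mathcal{L}-W}=0$, and you conclude $u=0$. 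None of these objects or hypotheses appear in the statement of Theorem~\ref{Thm:char_cri}, and nothing in your write-up establishes any of the five implications that theorem asserts.

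If your intent was actually to prove Theorem~\ref{thm_element1}, then your argument is essentially the paper's own. Both first observe that $u^+$ yields a subsolution of $(\mathcal{L}-W)[\varphi]=0$ via the identity $\mathcal{L}+V=(\mathcal{L}-W)+(V+W)$ together with $V+W\ge 0$, and both then run a maximal-$\varepsilon$ comparison of $u^+$ against $\Psi$, invoking the rigidity of critical operators (Theorem~\ref{Thm:char_cri}, item~(4)) to force $u^+$ to be a constant multiple of $\Psi$, which contradicts the $\liminf$ hypothesis. Your version organizes the $\varepsilon$-argument as an infimum of $\{\varepsilon:u^+\le\varepsilon\Psi\}$ rather than the paper's supremum of $\{\varepsilon:\Psi-\varepsilon u^+>0\}$, and treats $u^-$ by a separate pass instead of starting with ``without loss of generality $u^+\neq 0$'', but these are cosmetic differences only.
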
	

\begin{theorem}[Liouville comparison principle {\cite[Theorem 1.7]{P}}]\label{LC} Let $\Omega$ be a domain in $\R^N$, $N\geq 1$.
	Consider two Schr\"odinger-type operators defined on $\Omega$ of the
	form
	\begin{equation*}
	H_k:=P_k+V_k=-\nabla\cdot(A_k\nabla)+V_k\, ,   \qquad k=1,2,
	\end{equation*}
	such that  $V_k\in L^{q}_{\mathrm{loc}}(\Omega)$ for
	some $q>{N}/{2}$, and 
	 $A_k:\Omega
	\rightarrow \mathbb{R}^{N^2}$ is a symmetric  matrix
	valued function such that for every compact set $K\subset \Omega$
	there exists $\mu_K>1$ satisfying
	\begin{equation*} 
	\mu_K^{-1}I_N\le A_k(x)\le \mu_K I_N \qquad \forall x\in K.
	\end{equation*}
	Assume that the following assumptions hold true.
	\begin{itemize}
		\item[(i)] The operator  $H_1$ is critical in $\Omega$ with its Agmon ground state $\Psi_{H_1}$.
		
		\item[(ii)]  $H_2\geq 0$ in $\Om$, and there exists $\Phi_2 \in W^{1,2}_{\mathrm{loc}}(\Omega)$ such that
		$H_2 \Phi_2 \leq 0$ in $\Omega$, and  $\Phi_2^+\neq 0$.
		
		\item[(iii)] The following matrix inequality holds
		\begin{equation*}
		(\Phi_2^+)^2(x) A_2(x)\leq C\Psi_{H_1}^2(x) A_1(x) \quad  \mbox{ a.e.
			in } \Omega,
		\end{equation*}
		where $C$ is a positive constant.
	\end{itemize}
	Then the operator $H_2$ is critical in $\Omega$, and $\Phi_2$ is its Agmon
	ground state.    
\end{theorem}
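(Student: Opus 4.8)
\textbf{Proof proposal for Theorem~\ref{LC} (Liouville comparison principle).}

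The plan is to deduce criticality of $H_2$ directly from the characterization theorem (Theorem~\ref{Thm:char_cri}): it suffices to exhibit a positive supersolution of $H_2[\varphi]=0$ in $\Omega$ which is in fact a solution, and then to argue it has minimal growth at $\bar\infty$, hence is an Agmon ground state. The engine driving this is a \emph{ground-state transform} (the Allegretto--Piepenbrink / $h$-transform technique) applied with respect to the Agmon ground state $\Psi_{H_1}$ of the critical operator $H_1$, combined with the refined energy/Poincar\'e inequality that criticality of $H_1$ provides. First I would record the key consequence of criticality of $H_1$: since $H_1$ is critical with ground state $\Psi_{H_1}$, the associated quadratic form $Q_{H_1}(\phi):=\int_\Omega \big(A_1\nabla\phi\cdot\nabla\phi + V_1\phi^2\big)\dx$ is nonnegative on $C_c^\infty(\Omega)$ and is a \emph{null-form}: it has a null-sequence converging locally to $\Psi_{H_1}$. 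Equivalently, by the Agmon--Allegretto--Piepenbrink ground-state transform, writing $\phi=\Psi_{H_1}\eta$ one has
\begin{equation*}
Q_{H_1}(\Psi_{H_1}\eta)=\int_\Omega \Psi_{H_1}^2\, A_1\nabla\eta\cdot\nabla\eta \dx \qquad \text{for all } \eta\in C_c^\infty(\Omega),
\end{equation*}
and criticality says exactly that $\eta\mapsto \int_\Omega \Psi_{H_1}^2\,A_1\nabla\eta\cdot\nabla\eta\dx$ admits a null-sequence $(\eta_k)$ with $\eta_k\to 1$ locally.

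Next I would run the analogous transform for $H_2$ using the function $\Phi_2^+$. Since $H_2\Phi_2\le 0$ and $H_2\ge 0$, the positive part $v:=\Phi_2^+$ (nontrivial by hypothesis (ii)) is a nonnegative subsolution, and by the weak Harnack inequality / strong maximum principle considerations one upgrades $v$ to a positive function on $\Omega$ after possibly restricting; the precise statement needed is that $v$ is a positive subsolution of $H_2$ on its (connected) support, which one shows is all of $\Omega$. For $\psi\in C_c^\infty(\Omega)$ the standard computation gives
\begin{equation*}
Q_{H_2}(v\psi)\le \int_\Omega v^2\, A_2\nabla\psi\cdot\nabla\psi\dx,
\end{equation*}
(the inequality, rather than equality, because $v$ is merely a subsolution: the cross term $-\int (H_2 v)\, v\psi^2$ has the right sign). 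Now hypothesis (iii), $v^2 A_2\le C\Psi_{H_1}^2 A_1$ a.e., lets me compare the two transformed forms:
\begin{equation*}
Q_{H_2}(v\psi)\le \int_\Omega v^2\, A_2\nabla\psi\cdot\nabla\psi\dx \le C\int_\Omega \Psi_{H_1}^2\, A_1\nabla\psi\cdot\nabla\psi\dx.
\end{equation*}
Feeding the null-sequence $(\eta_k)$ of $H_1$ (with $\eta_k\to 1$ locally, $\int \Psi_{H_1}^2 A_1|\nabla\eta_k|^2\to 0$) into this chain shows $Q_{H_2}(v\eta_k)\to 0$ while $v\eta_k\to v$ locally in $L^2_{\loc}$ (and $v\ne 0$). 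Hence $Q_{H_2}$ possesses a null-sequence, which by the criticality characterization forces $H_2$ to be critical in $\Omega$ and forces $v$ (equivalently $\Phi_2^+$, and then $\Phi_2$ itself) to be, up to normalization, the Agmon ground state $\Psi_{H_2}$: indeed criticality gives a \emph{unique} (up to scalar) positive supersolution, it is automatically a solution of minimal growth, and a null-sequence converges locally to it. To conclude that $\Phi_2$ (not just $\Phi_2^+$) is the ground state, note that once $H_2$ is critical the positive solution is unique up to constants, $\Phi_2^+=c\Psi_{H_2}>0$ everywhere, so $\Phi_2^-\equiv 0$ and $\Phi_2=c\Psi_{H_2}$.

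The main obstacle is the functional-analytic bookkeeping around null-sequences: one must ensure that $v\eta_k$ is an admissible test object (it has compact support and lies in $W^{1,2}_{\loc}$, since $v$ is locally bounded by Remark~\ref{Rmk:subsol} and $\eta_k\in C_c^\infty$), that the inequality $Q_{H_2}(v\psi)\le\int v^2 A_2|\nabla\psi|^2$ genuinely extends from $\psi\in C_c^\infty$ to $\psi=\eta_k$ by density, and that the limit $v\eta_k\to v$ takes place in the right topology so that the null-sequence criterion of criticality theory (as in \cite{P07}) applies verbatim. A secondary technical point is justifying the ground-state transform identity for $H_1$ at the stated $L^q_{\loc}$ regularity of $V_1$ (this is classical, cf.\ \cite{Ag,P07}, using that local positive supersolutions are locally bounded and in $W^{1,2}_{\loc}$). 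Once these points are in place, the argument is exactly the two-line form-comparison above driven by hypothesis (iii), and there is nothing further to compute.
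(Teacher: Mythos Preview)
The paper does not contain a proof of Theorem~\ref{LC}; it is quoted verbatim from \cite[Theorem~1.7]{P} in the preliminaries and is used as a black box in the proofs of Theorems~\ref{Thm:Schrodinger} and~\ref{Thm:Gen_Schrodinger}. So there is no ``paper's own proof'' to compare against.

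That said, your proposal is essentially the original argument of \cite{P}. The scheme---ground-state transform for $H_1$ yielding $Q_{H_1}(\Psi_{H_1}\eta)=\int_\Omega \Psi_{H_1}^2 A_1\nabla\eta\cdot\nabla\eta$, existence of a null-sequence $(\eta_k)$ from criticality of $H_1$, the subsolution inequality $Q_{H_2}(\Phi_2^+\psi)\le \int_\Omega (\Phi_2^+)^2 A_2\nabla\psi\cdot\nabla\psi$ coming from $H_2[\Phi_2^+]\le 0$, and the pointwise matrix comparison (iii) to transfer the null-sequence to $Q_{H_2}$---is exactly how the theorem is proved in \cite{P}. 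Your identification of the technical checkpoints (admissibility of $v\eta_k$ as test functions, density to pass from $C_c^\infty$ to $W^{1,2}_c$, the null-sequence characterization of criticality from \cite{P07}) is also accurate. One small sharpening: you do not need to ``upgrade $v=\Phi_2^+$ to a positive function'' before running the form inequality; the identity $Q_{H_2}(v\psi)=\int v^2 A_2|\nabla\psi|^2 + \langle H_2[v],\,v\psi^2\rangle$ and hence the inequality hold regardless of where $v$ vanishes, and strict positivity of $\Phi_2^+$ (and thus $\Phi_2=\Phi_2^+$) is a \emph{conclusion} once criticality of $H_2$ and uniqueness of its positive supersolution are established.
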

\begin{remark}
For general second order elliptic operators, a variant of the above Liouville comparison principle (for {\it{strong solutions}}) is given in  \cite[Theorem 2.2]{ABG} under some regularity assumptions and additional conditions.   
\end{remark}

%%%%%%%%%%%%%%%%%%%%%%%%%%%%%%%%%
\section{Landis-type theorems for Schr\"odinger-type operators}\label{sec_3}
In this section we prove Theorem~\ref{Thm:Gen_Schrodinger} which is a generalized version of Theorem~\ref{Thm:Schrodinger} and present various applications of Theorem~\ref{Thm:Gen_Schrodinger} to certain particular Schr\"odinger-type equations.
\begin{proof}[Proof of Theorem \ref{Thm:Gen_Schrodinger}] 
	Suppose that $u\neq 0$. Without loss of generality, we may assume that  $u^+\neq 0$.  By the Liouville comparison principle, Theorem \ref{LC}, with the operators $H_1$ and $H_2$, it follows that $H_2$ is critical and $u=u^+>0$ is its Agmon ground state.

Now, since $u>0$ and $W\geq V_2$ outside a compact set in $\Om$, it follows that 
	$$(P_2+W)[u]=H_2[u] + (W-V_2)u = (W-V_2)u \geq 0 $$
outside a compact set in $\Om$. 
	So, $u$ is a positive supersolution of the equation $(P_2+W)[\varphi]=0$ outside a compact set in $\Om$. As $G_{P_2+W}$ is a positive solution of minimal growth at infinity in $\Gw$ of the same equation, we conclude that $u \geq C G_{P_2+W}$ near infinity in $\Om$ for some $C>0$. This contradicts our assumption 
$$\liminf_{x\to \bar{\infty}} \frac{|u(x)|}{G_{P_2+W}(x)} =0.$$
Therefore, $u = 0$.
\end{proof}
%%%%%%%%%%%%%%%%%%%
To illustrate Theorem \ref{Thm:Gen_Schrodinger}, we provide some examples of Schr\"odinger-type operators where we can explicitly estimate from below the associated positive solutions of minimal growth at infinity and use it to prove Landis-type theorems for the corresponding Schr\"odinger equation.
\begin{application}[Polynomial decay case in $\R^N$]\label{app_elem}
 As an elementary example of Theorem \ref{Thm:Gen_Schrodinger}, consider $\Om=\R^N$, $A_k=I_N$ for $k=1,2$,  $V_1=-W_{\mathrm{cri}}$,  and $V_2\leq W =0$ outside a compact set, where $W_{\mathrm{cri}} \geq 0$ is a critical Hardy-weight of $-\Delta$ in $\R^N$. In this case, $P_1=P_2=-\Delta$. Clearly, the associated positive solutions of minimal growth at infinity of the respective equations satisfy the following properties for some $C>0$: $$ CG_{-\Delta}\leq G_{P_1-W_{\mathrm{cri}}} \asymp \Psi_{H_1} \quad \mbox{as} \ |x| \rightarrow \infty\,, $$ and 
 \begin{align*}
 G_{P_2+W}(x)\geq CG_{-\Delta}(x) \asymp \begin{cases} 1 \ \ \qquad \ \mbox{if} \ \ N=1,2, \\
 |x|^{2-N} \ \ \mbox{if} \ \ N \geq 3 \,,
 \end{cases} \mbox{ as } |x| \rightarrow \infty,
 \end{align*}
 see \cite{PTT}.
Thus, in dimension one and two, if $u \in W^{1,2}_{\loc}(\R^N)$ is a bounded solution of $(-\Delta +V_2)[\varphi]=0$ in $\R^N$ satisfying $\liminf_{|x|\rightarrow \infty} |u(x)|=0$, then Theorem \ref{Thm:Gen_Schrodinger} ensures that $u = 0$. Whereas, in dimension $N \geq 3$, if $u$ satisfies the polynomial decay $|u|=O(|x|^{2-N})$, or even $|u|=O(|x|^{(2-N)/2})$ (which corresponds to the asymptotic of the Agmon ground state  $\Psi_{H_1}$ of the critical Hardy operator $H_1=-\Gd-W_{\mathrm{cri}}\, $, where $W_{\mathrm{cri}}=  \frac{(N-2)^2}{4}|x|^{-2}$ near infinity (see, \cite[Theorem~4.12]{DFP})), and  
$$\liminf_{|x|\rightarrow \infty} |u(x)||x|^{N-2}=0 \,,$$ 
then by Theorem \ref{Thm:Gen_Schrodinger} it follows that $u = 0$ in $\R^N$. It is noteworthy that this gives a Landis-type result under the polynomial decay assumption on $u$.
\end{application}

\begin{application}
Another application of Theorem \ref{Thm:Gen_Schrodinger} is Theorem \ref{Thm:Schrodinger} itself. 
\begin{proof}[Proof of Theorem \ref{Thm:Schrodinger}]
Let us consider $\Om=\R^N$, $A_k=I_N$ for $k=1,2$ and $V_2\leq W=1$ in Theorem \ref{Thm:Gen_Schrodinger}. It is well known that $-\Delta$ is critical in $\R^N$ for $N=1,2$ \cite{PTT}. In these dimensions, we take $V_1= W_{\mathrm{cri}}=0$. The associated Agmon ground state of the critical operator $H_1=-\Gd$ in $\R^N$ is  $\Psi_{H_1}=1$ \cite{PTT}. For $N \geq 3$, we choose a critical Hardy-weight similar to the one in the previous application and take  
	$$V_1(x)=-W_{\mathrm{cri}}(x)=-\left(\frac{N-2}{2}\right)^2\frac{1}{|x|^2} \quad  \mbox{near infinity}.$$ 
In this case the associated Agmon ground state of the critical operator $H_1=-\Delta-W_{\text{cri}}$ is $\Psi_{H_1} \asymp |x|^{(2-N)/2}$ near infinity of $\R^N$ (see \cite[Theorem~4.12]{DFP}). On the other hand, the minimal positive Green function $\mathcal{G}_{-\Gd+1}$  of $-\Gd+1$ in $\R^N$, $N\geq 1$, is a positive solution of minimal growth at infinity of $(-\Gd+1)[\varphi]=0$ in $\R^N$,  and it behaves as 
	$$\mathcal{G}_{-\Gd+1} (x)\asymp \frac{\mathrm{e}^{-|x|}}{|x|^{(N-1)/2}} 
	\qquad \mbox{as } \ |x| \rightarrow \infty,$$
	see \cite{Norio_Shimakura}.
  Now the proof follows immediately from Theorem \ref{Thm:Gen_Schrodinger}.
\end{proof}
\end{application}
\begin{remark}\label{rem_3-2}
	Let $\mathcal{G}_{-\Gd+1}(x) := \mathcal{G}_{-\Gd+1}(x,0)$ be the minimal positive Green function  of $-\Gd+1$ in $\R^N$. Under the stronger assumption $|u(x)|=o(\mathcal{G}_{-\Gd+1}(x))$ as $|x|\to \infty$, we indicate here a simpler proof of  Theorem \ref{Thm:Schrodinger}.
	Let $\Phi$  be a positive supersolution of the equation $H[\varphi]=0$ in $\R^N$  (recall that $H\geq 0$ in $\R^N$).   Since $\Phi$ is also a positive supersolution of the equation $(-\Gd+1)[\varphi]=0$ in $\R^N$, it follows that $\Phi \geq C\mathcal{G}_{-\Gd+1}$ in $\R^N\setminus B_1(0)$ for some $C>0$. Suppose that $u \neq 0$ and without loss of generality, assume that $u^+\neq 0$. For $\vge >0$, consider the supersolution $\Phi_\vge: =\Phi - \vge u^+$ of the equation $H[\varphi]=0$ in $\R^N$, which is a nonnegative for small positive $\vge$, but it is not positive for large $\vge$ (since $u^+\neq 0$). Now, using the maximal $\vge$-argument (see the proof of Theorem~\ref{thm_element}), we will get a contradiction. Thus,  $u=0$.
\end{remark}
%%%%%%%%%%%%%%%%%%%%%%%%
\begin{application}[Bounded domain] 
	Let $\Om$ be a bounded domain with a $C^1$-boundary $\partial \Om$. Assume further that  $A_k=I_N$ for $k=1,2$ and $V_2\leq W =1$ in Theorem \ref{Thm:Gen_Schrodinger}. Note that $-\Delta$ is subcritical in $\Om$ \cite{PT}. Let $\mathcal{G}_{-\Delta}$ be its minimal positive Green function. It follows from the Hopf boundary point lemma (see, \cite[Lemma 3.2]{Lamberti_Pinchover}) that $\mathcal{G}_{-\Delta} \asymp \mathrm{d}_{\Omega}$ near $\partial \Om$, where  $\mathrm{d}_{\Omega}$ is the distance function to $\partial \Gw$. Using \cite[Theorem~4.12]{DFP}, we can obtain an optimal Hardy-weight $W_{\mathrm{opt}}$ for $-\Delta$ in $\Om$ such that $W_{\mathrm{opt}}=\frac{1}{4}|\nabla \log (\mathcal{G}_{-\Delta})|^2$ outside a compact set in $\Om$. This in particular ensures that $H_1:= -\Delta - W_{\mathrm{opt}}$ is critical in $\Om$ with an Agmon ground state $\Psi_{H_1}$ that behaves like $\Psi_{H_1} \asymp \sqrt{\mathcal{G}_{-\Delta}} \asymp \sqrt{{\mathrm{d}}_{\Om}}$ as $x \rightarrow \bar{\infty} $ \cite[Theorem~4.12]{DFP}. Again, by the Hopf boundary point lemma (see, \cite[Lemma 3.2]{Lamberti_Pinchover}), it follows that the minimal positive Green function of $-\Delta+1$ satisfies
${\mathcal{G}}_{-\Gd+1} (x)\asymp {\mathrm{d}}_{\Om}(x)$ 
	as  $x \to \bar{\infty}$. Therefore, if $u \in W^{1,2}_{\loc}(\Om)$ is a solution to $(-\Delta +V)[\varphi]=0$ in $\Om$ with  $|u|=O(\sqrt{{\mathrm{d}}_{\Om}})$ in $\Om$ and 
\begin{equation}\label{eq_35}
\liminf_{x\rightarrow \bar{\infty}} \frac{|u(x)|}{{\mathrm{d}}_{\Om}(x)}=0 \,,
\end{equation}	
then by Theorem \ref{Thm:Gen_Schrodinger} we infer that $u=0$.
\end{application}
%%%%%%%%%%%%%%%%%%%%%%%%
\begin{remark}[Bounded domain: an alternative approach]
In the case of a $C^1$-bounded domain $\Om$, instead of using the Liouville comparison principle, we can alternatively use  {\it criticality theory} to deduce a Landis-type theorem for the nonnegative operator $H:=-\Delta +V$ with $V\in L^\infty(\Gw)$. Let $u \in W^{1,2}_{\loc}(\Om)$ be a solution of $H[\varphi]=0$ in $\Om$ with  $|u|=O(\mathrm{d}_{\Om})$ in $\Om$. Note that the spectrum of $\tilde H$, the Friedrichs extension $H$, is discrete. Moreover,  
	$$\gl_0:=\gl_0(H,1,\Gw)=\inf \gs(\tilde H)$$
	 is its simple principal eigenvalue. Therefore, $H\geq 0$ is subcritical if and only if  $\gl_0>0$.  It follows that if $H$ is subcritical in $\Gw$, then $u=0$. On the other hand, in the critical case (i.e., $\gl_0=0$), let $\Psi_H$ be its Agmon ground state. Then  by the simplicity of $\gl_0$ we obtain that $u=c\Psi_H$, where $c\in \R$. By the Hopf boundary point lemma (see, \cite[Lemma 3.2]{Lamberti_Pinchover}) $\Psi_H \asymp \text{d}_{\Omega}$ as $x\rightarrow \bar{\infty}$. Therefore, if in addition, $u$ satisfied \eqref{eq_35}, 
then it follows that $u=0$ in $\Om$.
\end{remark}

%An {\em exterior domain} is an unbounded domain with a nonempty compact boundary.

\begin{application}[Exterior domain]\label{Exterior domain}
	 Let $\Om$ be a $C^1$-exterior domain, i.e., an unbounded domain with a nonempty compact $C^1$-boundary $\partial \Om$. Take  $A_k=I_N$ for $k=1,2$ and $V_2\leq W =1$ in Theorem \ref{Thm:Gen_Schrodinger}. Note that $-\Delta$ is subcritical in $\Om$ \cite{PT}, and let $\mathcal{G}_{-\Delta}$ be its minimal positive Green function in $\Om$. It follows from the Hopf boundary point lemma (see, \cite[Lemma 3.2]{Lamberti_Pinchover}) that $\mathcal{G}_{-\Delta} \asymp \mathrm{d}_{\Omega}$ near $\partial \Om$. Near infinity $\mathcal{G}_{-\Delta}$ behaves as a positive solution of minimal growth of $-\Delta[\varphi]=0$ in $\R^N$, i.e.,  \begin{align*}
 \mathcal{G}_{-\Delta}(x) \asymp 
 |x|^{2-N} \ \ \mbox{if} \ \ N \geq 2,  \qquad \mbox{as } |x| \rightarrow \infty.
 \end{align*}
 Using \cite[Theorem~4.12]{DFP}, we can obtain for $N\geq 3$ an optimal Hardy-weight $W_{\mathrm{opt}}$ for $-\Delta$ in $\Om$ such that $W_{\mathrm{opt}}=\frac{1}{4}|\nabla \log (\mathcal{G}_{-\Delta})|^2$ outside a compact set in $\Om$. This in particular ensures that $H_1:= -\Delta - W_{\mathrm{opt}}$ is critical in $\Om$ with an Agmon ground state $\Psi_{H_1}$ that behaves like $\Psi_{H_1} \asymp \sqrt{\mathcal{G}_{-\Delta}}$ as $x \rightarrow \bar \infty$. 
 So, $\Psi_{H_1} \asymp  \sqrt{\mathrm{d}_{\Omega}}$ near $\partial \Om$, and  $\Psi_{H_1} \asymp |x|^{(2-N)/2}$ as $|x| \rightarrow \infty $. Obviously, the minimal positive Green function of $-\Delta+1$ in $\Om$ satisfies
  	\begin{equation*}
  	\mathcal{G}_{-\Gd+1} (x)\asymp \frac{\mathrm{e}^{-|x|}}{|x|^{(N-1)/2}} 
	\qquad \mbox{as } \ |x| \rightarrow \infty  .
	\end{equation*}
	Therefore, in order to apply Theorem \ref{Thm:Gen_Schrodinger} and to conclude that the solution $u=0$, one needs only to assume that  
	\begin{align}
	 |u|=O(\sqrt{\mathrm{d}_{\Om}}) \ \ \mbox{near} \ \partial \Om\,, & \, \quad |u(x)|=O(|x|^{(2-N)/2}) \ \ \ \mbox{as} \ |x|\rightarrow \infty, \label{eq_212} \\
	& \,  \mbox{and} \ \ \liminf_{|x|\rightarrow \infty} \frac{|u(x)|}{\mathcal{G}_{-\Gd+1}(x)}=0 , \label{eq_21}
		\end{align}
	where the behaviour of $\mathcal{G}_{-\Gd+1}$ is given above. If $N=2$, using the optimal Hardy-weight construction \cite[Theorem~4.12]{DFP} one can verify that it is enough to assume $|u| = O(\sqrt{\mathrm{d}_{\Omega}})$ near $\partial \Om$, $|u(x)| = O(\sqrt{\log |x|})$ as $|x|\to \infty$  and \eqref{eq_21}.
\end{application}
\begin{remark}
In the above application, if $\Om$ is a $C^{1,\gamma}$-exterior domain in $\R^N$, $N \geq 3$ and $\gamma \in (0,1]$, then we can replace the assumption $|u(x)|=O(|x|^{(2-N)/2})$ as $|x|\rightarrow \infty$ in \eqref{eq_212} with  $|u(x)|=O(|x|^{\alpha/2})$, where $\ga=(2-N) - \sqrt{(N-2)^2-1}$.
To get this, we replace the role of $W_{\mathrm{opt}}$ in the above application by a critical Hardy-weight $W_{\mathrm{cri}}$ of $-\Delta$ in $\Om$, which is equal to  $1/(4\mathrm{d}_{\Om}^2)$ outside a compact set in $\Om$. Then, it follows from \cite[Corollary 5.5]{Lamberti_Pinchover} that the Agmon ground state $\Psi_{H_1}$ of the corresponding critical operator $H_1:=-\Delta - W_{\mathrm{cri}}$ satisfies $\Psi_{H_1} \asymp \sqrt{\mathrm{d}_{\Omega}}$ near the boundary $\partial \Om$ and $\Psi_{H_1}(x) \asymp |x|^{\ga/2}$ as $|x| \rightarrow \infty$. This proves our claim.
\end{remark}
%%%%%%%%%%%%%%%%%	
\section{Landis-type theorem for the operator $\mathcal{L}$ and applications}\label{sec_general}
This section contains the proof of theorems~\ref{thm_element} and \ref{thm_element1} which are extensions of the Landis-type theorem for Schr\"odinger operators (Theorem~\ref{Thm:Gen_Schrodinger}) to the case of nonselfadjoint operators. The reason why we call these theorems a generalized form of Landis-type theorem, will be clear from the various applications of these theorems presented in this section. 
\subsection{Proofs of theorems~\ref{thm_element} and \ref{thm_element1}}
\begin{proof}[Proof of Theorem~\ref{thm_element}]
	Assume that $u \neq 0$.  
Without loss of generality, we may assume that $u^+\neq 0$. Then it follows from Remark \ref{Rmk:subsol} that $u^+$ is a subsolution of $\mathcal{L}[\varphi]=0$ in $\Gw$. 
By our assumption $\mathcal{L}\geq 0$ in $\Gw$, we pick a positive  supersolution  $\Phi$  of the equation $\mathcal{L}[\varphi]=0$ in $\Gw$. For $\vge>0$ consider the function $\Phi_\vge=\Phi-\vge u^{+}$.  Clearly,  $\Phi_\vge$ is a supersolution of the same underlying equation. Moreover, by our assumptions, for small enough $\vge>0$, $\Phi_\vge > 0$ in $\Gw$, and let 
	$$\vge_M= \sup\{\vge \mid  \Phi_\vge > 0\},$$
	and note that $\vge_M <\infty$. By the strong maximum principle (Remark~\ref{Rmk:subsol}), either $\Phi_{\vge_M}>0$, and this implies immediately a contradiction to the maximality of $\vge_M$, which in turn implies that either $u=0$, or $\Phi_{\vge_M}=0$.  In the latter case, it follows that $\vge_M u =\Phi$, and this implies that $u$ is a global  positive solution of $\mathcal{L}[\varphi]=0$ of minimal growth at infinity in $\Om$. So,  $\mathcal{L}$ is critical in $\Gw$, and  $u$ is an Agmon ground state of $\mathcal{L}$.       
\end{proof}
\begin{remark}
	Consider a nonnegative uniformly elliptic operator $\mathcal{L}$ in $\R^N$ of the form \eqref{Eq:Ellp_op} with bounded coefficients in $\R^N$. Then using the uniform Harnack inequality and a Harnack chain argument \cite{GT}, it follows that a global positive solution of the equation  $\mathcal{L}[\phi]=0$ in $\R^N$  cannot decay faster than a certain positive exponential $\gg$. Consequently, if $u$ is a solution of the equation $\mathcal{L}[\phi]=0$ in $\R^N$ satisfying   
	$$|u(x)|=o\left(\mathrm{e}^{-\gg|x|}\right) \qquad \mbox{as } |x| \to \infty,$$
	then using the maximal $\vge$ argument as above, it follows that $u=0$.  Hence, the Landis conjecture holds true for nonnegative uniformly elliptic operators with bounded coefficients.  We note that there are weaker assumptions that guarantee the validity of the uniform Harnack inequality. 
 	\end{remark}

\begin{proof}[Proof of Theorem~\ref{thm_element1}]
	Assume that $u \neq 0$. Without loss of generality, we may assume that $u^+\neq 0$. By Remark~\ref{Rmk:subsol}, $u^+$ is a subsolution of the equation $(\mathcal{L}+V)[\varphi]=0$ in $\Gw$. For $\vge>0$ consider the function $\Phi_\vge=\Phi-\vge u^+$, where $\Phi=\Psi_{\mathcal{L}-W}$.  Observe that  $\Phi_\vge$ is a supersolution of $(\mathcal{L}-W)[\varphi]=0$ in $\Om$. Indeed,
	\begin{align*}
	(\mathcal{L}-W) [\Phi_{\vge}]= (\mathcal{L}-W) [\Phi] - \vge (\mathcal{L}-W)[u^+] = - \vge \left( (\mathcal{L}+V)[u^+] -(V+W)u^+ \right) \geq 0 
	\end{align*}
	in $\Om$. Moreover, by our assumptions, for small enough $\vge>0$, we have $\Phi_\vge > 0$. But as the operator $\mathcal{L}-W$ is critical in $\Gw$, it admits (up to a multiplicative constant) a unique positive supersolution which is in fact, the Agmon ground state (Theorem \ref{Thm:char_cri}-${\it (4)}$). As a consequence, it follows that $u= u^+=C \Phi$, where $C$ is a positive constant. Subsequently, we reach a contradiction as $ \liminf_{x\to \bar{\infty}} |u(x)|/\Phi(x) =0$.
	Therefore, $u =0$.
\end{proof}
\begin{remark}
If one assumes that the coefficients of $\mathcal{L}$ are bounded and smooth enough and that the solution $u$ of  $(\mathcal{L}+V)[\varphi]=0$ in $\Om$ is a {\it strong solution} (i.e., $u \in W^{2,N}_{\loc}(\Om)$ and satisfies the equation a.e. in $\Om$), then one can give an alternative proof of Theorem \ref{thm_element1} by using a variant of Liouville comparison principle established in  \cite[Theorem 2.2]{ABG}. 

\end{remark}
Using \cite[Corollary 5.2 and Theorem 4.12]{DFP} we obtain a corollary of Theorem~\ref{thm_element1}.

\begin{corollary}\label{cor_opt}
Let $N \geq 2$ and $\mathcal{L}$ is an elliptic operator of the form \eqref{Eq:Ellp_op} which is subcritical in $\Gw$  with minimal positive Green function $\mathcal{G}_{\mathcal{L}}$ with singularity at $y\in \Gw$.

Suppose that there exists a positive solution $w \in W^{1,2}_{\loc}(\Gw)$ of $\mathcal{L}[\varphi]=0$ in $\Gw$ satisfying the Ancona condition \cite{DFP} $\lim_{x\rightarrow \bar\infty}(\mathcal{G}_{\mathcal{L}}/w)=0$, and that $\mathcal{L}\left[\sqrt{\mathcal{G}_{\mathcal{L}}w}\,\right]\in L^q_\loc(\Gw)$, $q>N/2$.

	Let $u \in W^{1,2}_{\loc}(\Gw)$ be a  solution of the equation $\mathcal{L}[\varphi]=0$ in $\Gw$ satisfying   
	\begin{equation}\label{eq_73}
	|u(x)| =O\left(\sqrt{\mathcal{G}_{\mathcal{L}}(x)w(x)}\right) \  \ \mbox{ as } x \to\bar\infty \ \mbox{ and} \ \ \  \liminf_{x\to \bar\infty} \frac{|u(x)|}{\sqrt{\mathcal{G}_{\mathcal{L}}(x)w(x)}} =0 \,.
	\end{equation}
	Then $u = 0$.
\end{corollary}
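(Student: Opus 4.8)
The plan is to derive Corollary~\ref{cor_opt} as a direct application of Theorem~\ref{thm_element1}, by exhibiting an appropriate critical Hardy-weight $W$ for $\mathcal{L}$ whose associated Agmon ground state $\Psi_{\mathcal{L}-W}$ has the prescribed asymptotics $\sqrt{\mathcal{G}_{\mathcal{L}}w}$, and then verifying the nonnegativity hypothesis $\mathcal{L}+0 \geq \mathcal{L} - W + W$ with the choice $V = 0$ so that $V + W = W \geq 0$ holds trivially. The starting point is the optimal Hardy-weight machinery of \cite{DFP}. Since $\mathcal{L}$ is subcritical in $\Gw$ with minimal positive Green function $\mathcal{G}_{\mathcal{L}}$ and there is a positive solution $w$ satisfying the Ancona condition $\lim_{x\to\bar\infty}(\mathcal{G}_{\mathcal{L}}/w)=0$, the couple $(\mathcal{G}_{\mathcal{L}},w)$ falls exactly into the framework of \cite[Corollary~5.2 and Theorem~4.12]{DFP}: one sets $W := \mathcal{L}[\varphi_0]/\varphi_0$ with $\varphi_0 := \sqrt{\mathcal{G}_{\mathcal{L}}\, w}$ (the ``supersolution construction''), and the regularity assumption $\mathcal{L}[\sqrt{\mathcal{G}_{\mathcal{L}}w}\,]\in L^q_\loc(\Gw)$ with $q>N/2$ guarantees that this $W$ lies in the correct function class. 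The cited results then assert that $W \geq 0$ is a Hardy-weight, that $\mathcal{L}-W$ is critical in $\Gw$, and that $\varphi_0=\sqrt{\mathcal{G}_{\mathcal{L}}w}$ is (up to a constant multiple) the Agmon ground state $\Psi_{\mathcal{L}-W}$.

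With this in hand I would apply Theorem~\ref{thm_element1} with this $W$ and with $V\equiv 0$; note $\mathcal{L}+V=\mathcal{L}\geq 0$ since $\mathcal{L}$ is subcritical hence nonnegative, and $V+W=W\geq 0$ in $\Gw$. A solution $u\in W^{1,2}_\loc(\Gw)$ of $\mathcal{L}[\varphi]=0$ is then a solution of $(\mathcal{L}+V)[\varphi]=0$, and the twin hypotheses in \eqref{eq_73}, namely $|u(x)|=O(\sqrt{\mathcal{G}_{\mathcal{L}}(x)w(x)})=O(\Psi_{\mathcal{L}-W}(x))$ and $\liminf_{x\to\bar\infty}|u(x)|/\sqrt{\mathcal{G}_{\mathcal{L}}(x)w(x)}=\liminf_{x\to\bar\infty}|u(x)|/\Psi_{\mathcal{L}-W}(x)=0$, are precisely the hypotheses of Theorem~\ref{thm_element1} (using $\Psi_{\mathcal{L}-W}\asymp \varphi_0$, so the $O$ and $\liminf=0$ conditions transfer verbatim up to harmless constants). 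Theorem~\ref{thm_element1} then yields $u=0$, completing the proof.

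The only genuine point requiring care — and hence the main obstacle — is checking that the hypotheses of \cite[Corollary~5.2 and Theorem~4.12]{DFP} are met by $(\mathcal{G}_{\mathcal{L}},w)$ under the stated assumptions. This amounts to confirming three things: first, that the Ancona condition $\lim_{x\to\bar\infty}(\mathcal{G}_{\mathcal{L}}/w)=0$ is exactly the hypothesis under which the supersolution construction of \cite{DFP} produces a \emph{critical} Hardy-weight with the two-sided asymptotic $\Psi_{\mathcal{L}-W}\asymp\sqrt{\mathcal{G}_{\mathcal{L}}w}$ near $\bar\infty$; second, that the integrability requirement $\mathcal{L}[\sqrt{\mathcal{G}_{\mathcal{L}}w}\,]\in L^q_\loc(\Gw)$ places $W$ in the class $L^q_\loc(\Gw)$ with $q>N/2$ needed both by \cite{DFP} and by Theorem~\ref{thm_element1}; and third, that $N\geq 2$ is used (as in \cite{DFP}) to ensure the construction is nondegenerate. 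All of these are quotations from \cite{DFP}, so in the write-up I would simply cite the two results, verify the hypotheses line by line, and note that $\Psi_{\mathcal{L}-W}\asymp\sqrt{\mathcal{G}_{\mathcal{L}}w}$ lets the decay conditions pass to Theorem~\ref{thm_element1} unchanged; no new estimate is needed.
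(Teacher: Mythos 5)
Your overall strategy is exactly the paper's: construct the optimal Hardy-weight $W=\mathcal{L}[\sqrt{\mathcal{G}_{\mathcal{L}}w}\,]/\sqrt{\mathcal{G}_{\mathcal{L}}w}$ via \cite[Theorem~4.12]{DFP}, identify $\sqrt{\mathcal{G}_{\mathcal{L}}w}$ as the Agmon ground state of the critical operator $\mathcal{L}-W$, and feed this into Theorem~\ref{thm_element1} with $V=0$. The verification that $V+W=W\geq 0$ and that the two decay hypotheses in \eqref{eq_73} transfer verbatim is also as in the paper.

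There is, however, one point you gloss over which the paper handles explicitly: the Green function $\mathcal{G}_{\mathcal{L}}$ is a positive \emph{solution} of $\mathcal{L}[\varphi]=0$ only in the punctured domain $\Gw\setminus\{y\}$ (in $\Gw$ it satisfies $\mathcal{L}[\mathcal{G}_{\mathcal{L}}]=\gd_y$ and blows up at $y$). Consequently the pair $(\mathcal{G}_{\mathcal{L}},w)$ enters the framework of \cite[Theorem~4.12]{DFP} only on $\Gw\setminus\{y\}$, and what you obtain is that $W$ is an optimal (hence critical) Hardy-weight for $\mathcal{L}$ in $\Gw\setminus\{y\}$, with $\sqrt{\mathcal{G}_{\mathcal{L}}w}$ the Agmon ground state of $\mathcal{L}-W$ \emph{in the punctured domain}. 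Your application of Theorem~\ref{thm_element1} must therefore also take place in $\Gw\setminus\{y\}$ (noting that $u$ restricted to $\Gw\setminus\{y\}$ still solves the equation there and that \eqref{eq_73} is unaffected, since $\bar\infty$ for the punctured domain includes approach to $y$, where $\sqrt{\mathcal{G}_{\mathcal{L}}w}\to\infty$ while $u$ is locally bounded). This yields $u=0$ in $\Gw\setminus\{y\}$, and one concludes $u=0$ in $\Gw$ by continuity. As written, your claim that ``$\mathcal{L}-W$ is critical in $\Gw$'' with ground state $\sqrt{\mathcal{G}_{\mathcal{L}}w}$ is not correct ($\sqrt{\mathcal{G}_{\mathcal{L}}w}\notin W^{1,2}_{\loc}(\Gw)$ in general, and it is not a solution across $y$), so you should insert the puncturing and the final continuity step; with that repair the argument coincides with the paper's proof.
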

\begin{proof}
	Using \cite[Theorem 4.12]{DFP}, it follows that  
		$$W=\frac{\mathcal{L}\left[\sqrt{\mathcal{G}_{\mathcal{L}}w}\,\right]}
		{\sqrt{\mathcal{G}_{\mathcal{L}}w}} \ \ \mbox{ in } \ \Om\setminus \{y\}$$ 
 is an optimal Hardy-weight of $\mathcal{L}$ in $\Gw\setminus \{y\}$. In particular, $\mathcal{L} -W$ is critical in $\Gw \setminus \{y\}$ with the Agmon ground state $\sqrt{\mathcal{G}_{\mathcal{L}}w}$. Moreover, \eqref{eq_73} is satisfied also in $\Gw \setminus \{y\}$. Hence, Theorem~\ref{thm_element1} with $V=0$ in $\Gw \setminus \{y\}$, this implies that $u = 0$ in $\Gw \setminus \{y\}$, and by continuity, $u = 0$ in $\Gw$. 
\end{proof}

\begin{remark} \label{Rmk_compare}
Theorem \ref{thm_element1} is stronger than Theorem \ref{thm_element}.
Indeed, by taking $V= 0$ in Theorem \ref{thm_element1}, one can see that the assumption $V + W \geq 0$ is automatically satisfied for a critical Hardy-weight $W \in L^{q}_{\loc}(\Om)$ with $q>N/2$. Observe that the Agmon ground state $\Psi_{\mathcal{L}-W}$ of $\mathcal{L}-W$ in $\Om$ and the positive minimal Green function $\mathcal{G}_{\mathcal{L}}$ of $\mathcal{L}$ in $\Om$  satisfy $\Psi_{\mathcal{L}-W} \geq C \mathcal{G}_{\mathcal{L}}$ near $\bar\infty$. Consequently, Theorem \ref{thm_element1} proves the Landis-type theorem under a slower decay assumption on the solution $u$ than Theorem \ref{thm_element}. However,  in many cases, it is easier to find a lower bound for $\mathcal{G}_{\mathcal{L}}$, while finding an explicit nontrivial critical Hardy-weight $W$  with an Agmon ground state $\Psi_{\mathcal{L}-W}$  might be rather difficult. Moreover, the existence of an optimal Hardy-weight via the construction in \cite[Theorem 4.12]{DFP} is guaranteed only in the {\it{quasi-symmetric}} case \cite{Ancona1}. Therefore, we prefer keeping both theorems.
\end{remark}
%%%%%%%%%%%%%%%%
\subsection{Applications of theorems~\ref{thm_element} and \ref{thm_element1}}
As we mentioned in the introduction, in order to obtain a Landis-type result for a nonnegative operator $\mathcal{L}$ using theorems~\ref{thm_element} and \ref{thm_element1}, it is enough to find lower bounds for $G_{\mathcal{L}}$ and $\Psi_{\mathcal{L}-W}$, respectively. In this subsection, we provide explicit estimates of these functions for some operators.
\begin{application}[The periodic case]\label{prop_periodic} Consider an elliptic operator $\mathcal{L} \geq 0$  on $\R^N$ in the divergence form as in \eqref{Eq:Ellp_op}. Assume that the  coefficients of $\mathcal{L}$ are $\mathbb{Z}^{N}$-periodic, that is,  
	$$a_{ij}(x+z)= a_{ij}(x), \ \tilde{b}_{i}(x+z)=\tilde{b}_{i}(x),\  b_{i}(x+z)=b_{i}(x), \ c(x+z)=c(x) \ \ \forall x \in \R^N \mbox{ and } \forall z \in \mathbb{Z}^{N},$$ 
	and all the other assumptions of Theorem~\ref{thm_element} are satisfied. 
 Now consider the set
\begin{multline*}
\Gg:=\left\{\beta \in \R^N \mid \exists \, \psi \in W^{1,2}_{\loc}(\R^N) \ \mbox{such that} \ \mathcal{L}[\psi] =  0  \mbox{ in }  \R^N, \right.\\
 \left. \mbox{and }  \psi=\mathrm{e}^{-\beta \cdot x} w_\gb  \mbox{ for some positive periodic } w_\gb \right\} .
 \end{multline*}
It has been shown by Agmon \cite{Ag} that $\Gg\neq \emptyset$ if $\gl_0:=\gl_0(\mathcal{L},1,\R^N)\geq 0$. Moreover, either $\Gg =\{\beta_0\}$ is singleton and this happens if and only if $\gl_0=0$, or $\Gg$ is the boundary of an $N$-dimensional strictly convex compact set $K$ with
smooth boundary $\partial K=\Gg$, and this happens if and only if $\gl_0>0$. In addition, if $\mathcal L$ is symmetric, then $\Gg=-\Gg$. By the compactness of $\Gg$, for each $s \in \mathbb{S}^{N-1}$ there exists $\beta(s) \in \Gg$ such that
$$ \sup_{\beta \in \Gg} \beta \!\cdot\! s= \beta(s) \!\cdot\! s.$$

Observe that if $\gl_0>0$, then $\mathcal{L}$ is subcritical. While, if $\gl_0=0$, then $\mathcal{L}$ is subcritical if and only if $N \geq 3$ \cite[Theorem 6.2]{Murata}. In this discussion, we consider $N \geq 3$, and hence, $\mathcal{L}$ is subcritical.

Suppose that $\gl_0>0$. Murata and Tsuchida \cite[Theorem 1.1]{Murata} showed that in this case the  minimal positive Green function $\mathcal{G}_{\mathcal{L}}$ of $\mathcal{L}$ in $\R^N$ satisfies  
\begin{align} \label{Eq:Green_est}
\mathcal{G}_{\mathcal{L}} (x) \asymp 
\frac{\mathrm{e}^{-\beta({x}/{|x|})\, \cdot\, x}}{|x|^{\frac{N-1}{2}}} \quad  \mbox{ as } \ |x| \rightarrow \infty \,.
\end{align}
	On the other hand, if $\gl_0=0$, then by  \cite[Theorem 1.3]{Murata}
	$$\mathcal{G}_{{\mathcal{L}}}(x)\asymp \mathrm{e}^{-\gb_0\cdot x}|x|^{2-N} 
	\quad \mbox{as} \ |x| \rightarrow \infty.$$ 
Clearly, a positive solution $G_\mathcal{L}$ of minimal growth at infinity of the equation $\mathcal{L}[\varphi]=0$ in $\R^N$ behaves like $G_\mathcal{L} \asymp \mathcal{G}_{\mathcal{L}}$ near infinity in $\R^N$. Thus, we obtain the following result as a particular case of Theorem~\ref{thm_element}:

Suppose that $N\geq 3$, and $\mathcal{L} \geq 0$ in $\R^N$ is an operator of the form \eqref{Eq:Ellp_op} with $\mathbb{Z}^{N}$-periodic coefficients. 
Let $u \in W^{1,2}_{\loc}(\R^N)$ be a solution of the equation $\mathcal{L}[\varphi]=0$ in $\R^N$ satisfying   
\begin{equation*}\label{eq_32}	
\begin{cases}
|u(x)| =O(|x|^{\frac{1-N}{2}}\mathrm{e}^{-\beta({x}/{|x|}) \cdot x})   \mbox{ as } |x| \to\infty & \mbox{if} \ \gl_{0}>0,\\[4mm]
|u(x)| =O(\mathrm{e}^{-\gb_{0}\cdot x}|x|^{2-N})   \mbox{ as } |x| \!\to\!\infty
& \mbox{if} \  \gl_{0}=0,
\end{cases}
\end{equation*}
where $\gl_{0}=\gl_0(\mathcal{L},1,\R^N)$. Then $u = 0$.  
\end{application}
We note that using  Liouville-type theorems for periodic operators proved in \cite{Kuchment}, one gets a Landis-type results for solutions of polynomial growth.    
%%%%%%%%%%%%%%%%%%
\begin{remark}
For a (non-sharp) two-sided estimate of the positive minimal Green function of the operator 
\begin{align*} 
\mathcal{L}[\varphi]:=-\Delta \varphi + a(x) \cdot \nabla \varphi + \la \varphi \qquad \mbox{in } \R^N, 
\end{align*}
where $a\in L^\infty(\R^N)$ and $\la >0$, see \cite[Theorem 4.1]{Hill}.
\end{remark}
\begin{remark}
	One can use sufficient conditions for the equivalence of Green functions of two subcritical operators (see for example \cite{Ancona,Murata1,P_JDE1,P_DIEQ} and references therein), to obtain Landis-type theorems for {\it{small perturbations}} of an operator $\mathcal{L}$ with a known lower bound for its Green function $\mathcal{G}_{\mathcal{L}}$. 
\end{remark}
%%%%%%%%%%%%%%
Now we present an explicit application of Corollary~\ref{cor_opt}.

\begin{application}\label{app-4_3}
	Take $\mathcal{L}=-\Delta +1$ in $\R^N$. Recall that the minimal Green function $\mathcal{G}_{\mathcal{L}}$ of $\mathcal{L}$ satisfies $$\mathcal{G}_{\mathcal{L}}(x) \asymp \frac{{\mathrm{e}}^{-|x|}}{|x|^{(N-1)/2}} \ \ \mbox{as} \ |x| \rightarrow \infty \,.$$
	It is known that there exists a positive solution $w$ of $\mathcal{L}[\varphi]=0$ in $\R^N$ such that
	$$w(x) \asymp \frac{{\mathrm{e}}^{|x|}}{|x|^{(N-1)/2}} \ \ \mbox{as} \ |x| \rightarrow \infty \,,$$
	see Appendix. 
	Clearly, $\mathcal{G}_{\mathcal{L}}$ and $w$ satisfy the Ancona condition i.e., $\lim_{|x|\rightarrow \infty} \mathcal{G}_{\mathcal{L}}/w=0.$ Subsequently, using Corollary~\ref{cor_opt}, it follows that if $u \in W^{1,2}_{\loc}(\Gw)$ is a  solution of the equation $\mathcal{L}[\varphi]=0$ in $\R^N$ satisfying   
	\begin{equation}\label{eq_137}		
	|u(x)| =O\left(|x|^{\frac{(1-N)}{2}}\right) \  \ \mbox{ as } |x| \to \infty, \ \mbox{ and} \ \ \  \liminf_{|x| \to \infty}   |u(x)||x|^{\frac{(N-1)}{2}}  =0,
	\end{equation}
	then $u = 0$.
\end{application}
%\begin{application}\label{app-4_3}
%Take $\mathcal{L}=-\Delta +1$ in $\R^N$. Recall that a minimal Green function $\mathcal{G}_{\mathcal{L}}$ of $\mathcal{L}$ satisfies $$\mathcal{G}_{\mathcal{L}} \asymp \frac{{\mathrm{e}}^{-|x|}}{|x|^{(N-1)/2}} \ \ \mbox{as} \ |x| \rightarrow \infty \,.$$
%It is known that there exists a positive solution $w$ of $\mathcal{L}[\varphi]=0$ in $\R^N$ such that
%$$w \asymp \frac{{\mathrm{e}}^{|x|}}{|x|^{(N-1)/2}} \ \ \mbox{as} \ |x| \rightarrow \infty \,,$$
%see Appendix. 
%Clearly, $\mathcal{G}_{\mathcal{L}}$ and $w$ satisfy the Ancona condition i.e., $\lim_{|x|\rightarrow \infty} %\mathcal{G}_{\mathcal{L}}/w=0.$ Subsequently, using Corollary~\ref{cor_opt}, it follows that if $u \in %W^{1,2}_{\loc}(\Gw)$ is a  solution of the equation $\mathcal{L}[\varphi]=0$ in $\R^N$ satisfying   
%	\begin{equation}\label{eq_137}	
%	|u(x)| =O\left(|x|^{\frac{(1-N)}{2}}\right) \  \ \mbox{ as } x \to\bar\infty, \ \mbox{ and} \ \ \  \liminf_{x\to %\bar\infty}   |u(x)||x|^{\frac{(N-1)}{2}}  =0,
%	\end{equation}
%	then $u = 0$.
% \end{application}
  \begin{remark}
  	Note that Theorem \ref{Thm:Schrodinger}  also gives a Landis-type theorem for the particular case $\mathcal{L}=-\Delta +1$ in $\R^N$, but under the assumptions \eqref{Eq:cond_Thm:Schrodinger} which are  stronger than \eqref{eq_137}.
  \end{remark} 
%%%%%%%%%%%%%%%%%%%
\section{Quasilinear Landis-type theorem}\label{Sec-Quasi}
%%%%%%%%%%%%%%%%%%%%%
Let $A_k: \Om \rightarrow \R^{N^2}$, $k=1,2$ be two matrices as in Theorem \ref{Thm:Gen_Schrodinger}. In addition, assume that the entries of $A_k$ are H\"older continuous in $\Om$. For $1<p<\infty$ and $V_k \in L^{\infty}_{\loc}(\Om)$, consider two quasilinear 
Schr\"odinger type operators:
\begin{align} \label{Eq:quas_Sch_op}
\mathcal{Q}_k := -\Delta_{p,A_k} + V_k \mathcal{I}_p \,, \quad k=1,2,
\end{align}
where $\mathcal{I}_p(\varphi)=|\varphi|^{p-2}\varphi$ and $\Delta_{p,A}$ denotes the so-called $(p,A)$-Laplacian which is defined as 
$$\Delta_{p,A}(\varphi)={\mathrm{div}}(|\nabla \varphi|_{A(x)}^{p-2} A(x) \nabla \varphi) \,,$$
where $A=(a_{ij})\!:\! \Om \rightarrow \R^{N^2}$, and $|\xi|_{A(x)}^2 \! := \! A(x) \xi \!\cdot\! \xi$ for all $x \!\in\! \Om$ and $\xi \in \R^N$. Assume that  for $k=1,2$, $\mathcal{Q}_k$ is nonnegative, i.e., it admits a (weak) positive supersolution
$v_k\in W^{1,p}_{\loc}(\Om)$ of $\mathcal{Q}_k[\phi]=0$ in $\Gw$. The criticality theory has been developed for such operators in \cite{PP}. In particular, the notions of positive solution of minimal growth, minimal positive Green function, Agmon ground state, etc., have been extended to the quasilinear setting, see \cite{PP}. Using this criticality theory for the operators of the above form, the following quasilinear analogue of the linear Liouville comparison theorem (Theorem~\ref{LC}) is proved in  \cite{PP,PTT}.
%%%%%%%%%%%%%%%%%%%
\begin{theorem}[Quasilinear Liouville comparison principle] \label{Thm:quas_LC}
Let $\Omega$ be a domain in $\R^N$, $N\geq 1$, and consider two nonnegative quasilinear Schr\"odinger operators $\mathcal{Q}_k$ as in \eqref{Eq:quas_Sch_op}. Assume that the following assumptions hold true.
	\begin{itemize}
		\item[(i)] The operator  $\mathcal{Q}_1$ is critical in $\Omega$ with its Agmon ground state $\Psi_{\mathcal{Q}_1}$. 
		
		\item[(ii)] There exists $\Phi_2 \in W^{1,p}_{\mathrm{loc}}(\Omega)$ with
		$\Phi_2^+\neq 0$ such that $\mathcal{Q}_2[\Phi_2] \leq 0$ in $\Omega$. 
		
		\item[(iii)] There exists a positive constant C such that the following matrix inequality holds
		\begin{equation}\label{psialephia1}
		(\Phi_2^+)^2(x) A_2(x)\leq C\Psi_{\mathcal{Q}_1}^2(x) A_1(x) \quad  \mbox{ a.e.
			in } \Omega.
		\end{equation}
		\item[(iv)] There exists $C>0$ such that $$|\nabla \Phi_2(x)|_{A_2(x)}^{p-2} \leq C |\nabla \Psi_{\mathcal{Q}_1}(x)|_{A_1(x)}^{p-2} \qquad \mbox{for a.e. } x \in \Om \cap (\Phi_2 >0).$$
	\end{itemize}
	Then the operator $\mathcal{Q}_2$ is critical in $\Omega$, and $\Phi_2$ is its Agmon
	ground state.  
\end{theorem}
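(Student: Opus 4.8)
The plan is to prove first that $\mathcal{Q}_2$ is critical in $\Om$ with Agmon ground state $\Phi_2^+$, and then to upgrade this to $\Phi_2=\Phi_2^+$; every ingredient from the criticality theory of the $(p,A)$-Laplacian that I invoke (the null-sequence characterization of criticality, the generalized Picone inequality, and uniqueness up to a constant together with strict positivity of positive solutions of minimal growth) is available from \cite{PP,PTT}, so the argument is the exact nonlinear counterpart of the proof of Theorem~\ref{LC}. To set up, by the quasilinear analogue of Remark~\ref{Rmk:subsol} the function $w:=\Phi_2^+$ is a nonnegative, nontrivial subsolution of $\mathcal{Q}_2[\varphi]=0$ in $\Om$; by the weak Harnack inequality it is locally bounded and, on the open set $\{w>0\}$, locally bounded away from zero. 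Recall also that $\mathcal{Q}_2\ge 0$ is a standing hypothesis, and that the Agmon ground state $\Psi_{\mathcal{Q}_1}$ is a strictly positive solution of $\mathcal{Q}_1[\varphi]=0$.

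Next I would use the criticality of $\mathcal{Q}_1$ to fix a null sequence for the functional $\mathcal{Q}_1[\varphi]:=\int_\Om\big(|\nabla\varphi|_{A_1}^p+V_1|\varphi|^p\big)\dx$: a sequence $(\phi_k)\subset C_c^\infty(\Om)$, which may be taken nonnegative (replace $\phi_k$ by $|\phi_k|$), normalized by $\|\phi_k\|_{L^p(B_0)}=1$ for a fixed ball $B_0\Subset\Om$, with $\mathcal{Q}_1[\phi_k]\to 0$ and $\phi_k\to c\,\Psi_{\mathcal{Q}_1}$ in $L^p_\loc(\Om)$ for some $c>0$. The crux is to transplant this to a null sequence for $\mathcal{Q}_2$ by setting
\[
\eta_k:=\frac{w}{\Psi_{\mathcal{Q}_1}}\,\phi_k .
\]
Using the local bounds on $w/\Psi_{\mathcal{Q}_1}$ one checks that $\eta_k\in W^{1,p}(\Om)$ is nonnegative with compact support and $\eta_k\to c\,w$ in $L^p_\loc(\Om)$, so it remains to prove $\mathcal{Q}_2[\eta_k]\to 0$. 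Writing $L_j(\cdot,v)$ for the pointwise nonnegative Picone remainder of the $(p,A_j)$-Laplacian relative to a positive function $v$, and testing the subsolution inequality $\mathcal{Q}_2[w]\le 0$ against $\eta_k^p/w^{p-1}$ (after a cutoff keeping the support inside $\{w>0\}$ and an approximation argument) gives
\[
\mathcal{Q}_2[\eta_k]\ \le\ \int_\Om L_2(\eta_k,w)\,\dx .
\]
Since $\eta_k/w=\phi_k/\Psi_{\mathcal{Q}_1}$, I would then estimate $L_2(\eta_k,w)$ pointwise by a constant times $L_1(\phi_k,\Psi_{\mathcal{Q}_1})$ plus cross terms: assumption (iii) trades $w^2A_2$ for $C\,\Psi_{\mathcal{Q}_1}^2A_1$, assumption (iv) trades the weight $|\nabla w|_{A_2}^{p-2}$ for $C\,|\nabla\Psi_{\mathcal{Q}_1}|_{A_1}^{p-2}$, and Young's inequality absorbs the products of $\nabla\phi_k$ with $\nabla(w/\Psi_{\mathcal{Q}_1})$. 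Because $\Psi_{\mathcal{Q}_1}$ solves $\mathcal{Q}_1[\varphi]=0$, testing against $\phi_k^p/\Psi_{\mathcal{Q}_1}^{p-1}$ yields $\int_\Om L_1(\phi_k,\Psi_{\mathcal{Q}_1})\,\dx=\mathcal{Q}_1[\phi_k]\to 0$, and combining the estimates gives $\mathcal{Q}_2[\eta_k]\to 0$.

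At this point $(\eta_k)$ is a null sequence for $\mathcal{Q}_2$ converging in $L^p_\loc(\Om)$ to the positive function $c\,w=c\,\Phi_2^+$, so the null-sequence characterization of criticality in \cite{PP} forces $\mathcal{Q}_2$ to be critical with Agmon ground state $\Phi_2^+$. Finally, since the Agmon ground state $\Phi_2^+$ is everywhere positive in $\Om$ and $\max(\Phi_2,0)=\Phi_2^+>0$ a.e., we get $\Phi_2>0$ a.e., whence $\Phi_2=\Phi_2^+$ is the Agmon ground state of $\mathcal{Q}_2$, as claimed.

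The hard part will be the pointwise comparison $L_2(\eta_k,w)\lesssim L_1(\phi_k,\Psi_{\mathcal{Q}_1})+(\text{cross terms})$: for $p\ne 2$ the $(p,A)$-Picone remainder is genuinely nonlinear — it records both the lengths and the directions of the competing gradients — and it is precisely the joint use of the matrix inequality (iii) and the gradient-weight inequality (iv), not either one alone, that makes this estimate close. One must also treat carefully the behaviour near $\partial\{\Phi_2>0\}$ (where $w$ fails to be positive, so that $\eta_k^p/w^{p-1}$ and the Picone inequality are only legitimate after cutoff and approximation) and verify the integrability needed to justify the two integrations by parts in the present low-regularity setting ($w\in W^{1,p}_\loc(\Om)$, $V_k\in L^\infty_\loc(\Om)$).
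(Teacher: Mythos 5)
The paper does not prove Theorem~\ref{Thm:quas_LC} at all: it is quoted verbatim from \cite{PP,PTT}, so there is no in-paper argument to compare against. Your sketch reconstructs essentially the proof given in those references: take a null sequence $(\phi_k)$ for the functional of $\mathcal{Q}_1$, transplant it via $\eta_k=(\Phi_2^+/\Psi_{\mathcal{Q}_1})\phi_k$, use the subsolution property of $\Phi_2^+$ together with (iii) and (iv) to show $\mathcal{Q}_2[\eta_k]\to0$, and invoke the null-sequence characterization of criticality to identify $\Phi_2^+$ (hence, by strict positivity of ground states, $\Phi_2$) as the Agmon ground state. The one presentational difference is that \cite{PTT,PP} do not compare Picone remainders pointwise; they pass through the two-sided ``simplified energy'' estimate
\[
\int_\Om L(vf,v)\,\dx \;\asymp\; \int_\Om v^2\,|\nabla f|_A^2\,\bigl(v\,|\nabla f|_A+f\,|\nabla v|_A\bigr)^{p-2}\dx ,
\]
applied with the same $f=\phi_k/\Psi_{\mathcal{Q}_1}$ for both operators, after which (iii) and (iv) compare the two energies term by term; this is exactly how the references discharge the step you correctly single out as the hard part, so your plan is sound as written.
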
 
%%%%%%%%%%
Theorem~\ref{Thm:quas_LC} extends Theorem \ref{LC} to the quasilinear case as the extra assumption $(iv)$ in the above theorem boils down to a trivial one when $p=2$. Having this Liouville comparison theorem for the quasilinear operators, we can repeat the same arguments as in Theorem \ref{Thm:Gen_Schrodinger} to prove the following Landis-type theorem for quasilinear  Schr\"odinger-type equations.  
%%%%%%%%%%%%%%
\begin{theorem}\label{Thm:Gen_quasi_Schrodinger} 
Let $\Om$ be a domain in $\R^N$, $N\geq 1$ and	 consider two nonnegative quasilinear  Schr\"odinger-type operators $\mathcal{Q}_k$ on $\Om$ as in \eqref{Eq:quas_Sch_op}.  Let $W \in L^{\infty}_{\loc}(\Om)$ satisfies $W \geq V_2$ outside a compact set in $\Om$.
  
	Suppose that $\mathcal{Q}_1$ is critical in $\Gw$ with an Agmon  ground state $\Psi_{\mathcal{Q}_1}$, and  let $u \in W^{1,p}_{\loc}(\Om)$ be a solution of the equation $\mathcal{Q}_2 [\varphi]=0$ in $\Om$  satisfying   
\begin{align}	\label{assumption_1}
\! \!\!\! \! u(x)^2A_2(x)\leq C\Psi_{\mathcal{Q}_1}(x)^2A_1(x) & \  \mbox{a.e. in } \Gw, \ \ \ |\nabla u(x)|_{A_2(x)}^{p-2} \leq C |\nabla \Psi_{\mathcal{Q}_1}(x)|_{A_1(x)}^{p-2}  \ \,  \mbox{a.e. in } \Gw,   \\[2mm]
&  \ \mbox{ and } \ \ \liminf_{x\to \bar{\infty}} \frac{|u(x)|}{G_{-\Delta_{p,A_2}+W}(x)} =0 \label{assumption_2} \,,
\end{align}
where $G_{-\Delta_{p,A_2}+W}$ is a positive solution of minimal growth of $(-\Delta_{p,A_2}+ W\mathcal{I}_p)[\varphi]=0$ in a neighbourhood of infinity in $\Om$.
Then $u = 0$.   
\end{theorem}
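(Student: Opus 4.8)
The plan is to transcribe the proof of Theorem~\ref{Thm:Gen_Schrodinger} almost verbatim, with the quasilinear Liouville comparison principle (Theorem~\ref{Thm:quas_LC}) replacing Theorem~\ref{LC} and the quasilinear criticality theory of \cite{PP} replacing its linear analogue. Assume, towards a contradiction, that $u\not\equiv 0$. Since $-u$ is again a solution of $\mathcal{Q}_2[\varphi]=0$ (because $\mathcal{Q}_2[-\varphi]=-\mathcal{Q}_2[\varphi]$) and both \eqref{assumption_1} and \eqref{assumption_2} are invariant under $u\mapsto -u$, we may assume $u^+\neq 0$. Now apply Theorem~\ref{Thm:quas_LC} with $\Phi_2:=u$: hypothesis (i) is the criticality of $\mathcal{Q}_1$ with ground state $\Psi_{\mathcal{Q}_1}$; hypothesis (ii) holds because $\Phi_2\in W^{1,p}_{\loc}(\Om)$, $\Phi_2^+=u^+\neq 0$ and $\mathcal{Q}_2[\Phi_2]=0\leq 0$; and hypotheses (iii) and (iv) are precisely the two pointwise inequalities in \eqref{assumption_1}. (Recall that $\mathcal{Q}_1,\mathcal{Q}_2$ are nonnegative by hypothesis.) The conclusion is that $\mathcal{Q}_2$ is critical in $\Om$ and $\Phi_2=u$ is its Agmon ground state; in particular $u=u^+>0$ throughout $\Om$.

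Next I would exploit the sign condition on $W$. Since the zeroth-order term enters the weak formulation additively, for the positive function $u$ one has, in the weak sense, $(-\Delta_{p,A_2}+W\mathcal{I}_p)[u]=\mathcal{Q}_2[u]+(W-V_2)u^{p-1}=(W-V_2)u^{p-1}$, and outside a compact set $K\Subset\Om$ we have $W\geq V_2$, hence $(-\Delta_{p,A_2}+W\mathcal{I}_p)[u]\geq 0$ there. Thus $u$ is a positive supersolution of $(-\Delta_{p,A_2}+W\mathcal{I}_p)[\varphi]=0$ in $\Om\setminus K$. Let $G:=G_{-\Delta_{p,A_2}+W}$ be a positive solution of minimal growth at $\bar\infty$ of this equation and fix a Lipschitz bounded domain $K_1$ with $K\Subset K_1\Subset\Om$. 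Since $-\Delta_{p,A_2}+W\mathcal{I}_p$ is positively homogeneous of degree $p-1$, every multiple $cG$ with $c>0$ is again a positive solution of minimal growth, so for $c>0$ small enough that $cG\leq u$ on the compact set $\partial K_1$ the minimal growth property yields $cG\leq u$ in $\Om\setminus K_1$. Consequently $\liminf_{x\to\bar\infty}|u(x)|/G(x)\geq c>0$, contradicting \eqref{assumption_2}; therefore $u=0$.

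The argument is conceptually routine, so the points needing care are bookkeeping ones: first, that hypothesis (iv) of Theorem~\ref{Thm:quas_LC}, which is genuinely new in the quasilinear regime and trivial when $p=2$, is exactly the gradient inequality included in \eqref{assumption_1}; and second, that the quasilinear criticality theory of \cite{PP} supplies, in the generality assumed here, the notions of positive solution of minimal growth and Agmon ground state together with the comparison statement and the degree-$(p-1)$ homogeneity scaling used above. Once these are in place the proof is complete, and I do not expect any substantive obstacle beyond them.
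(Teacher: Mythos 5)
Your proposal is correct and is essentially the paper's own argument: the authors simply state that the proof ``follows as the proof of Theorem~\ref{Thm:Gen_Schrodinger}'', i.e., apply the quasilinear Liouville comparison principle (Theorem~\ref{Thm:quas_LC}) to conclude that $\mathcal{Q}_2$ is critical with ground state $u=u^+>0$, then use $W\geq V_2$ to make $u$ a positive supersolution of $(-\Delta_{p,A_2}+W\mathcal{I}_p)[\varphi]=0$ near infinity and compare with $G_{-\Delta_{p,A_2}+W}$ to contradict \eqref{assumption_2}. Your additional remarks on the oddness of $\mathcal{Q}_2$ and the degree-$(p-1)$ homogeneity needed to scale $G$ are exactly the bookkeeping the quasilinear setting requires.
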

\begin{proof}
It follows as the proof of Theorem \ref{Thm:Gen_Schrodinger}, and hence it is omitted.
\end{proof}
%%%%%%%%%%%%%%%%%%
Next we provide some explicit examples where Theorem \ref{Thm:Gen_quasi_Schrodinger} can be applied.
\begin{application}\label{app_53}
Assume that in the above theorem  $\Om=\R^N$, $A_k=I_N$ for $k=1,2$, and $V_2\leq W=0$ outside a compact set. A positive solution  $G_{-\Delta_p+W}$ of the equation $(-\Delta_p+W\mathcal{I}_p)[\varphi]=0$ in $\Om$ of minimal growth at infinity satisfies \cite[Example~1.7]{PTT}  
 \begin{align*}
 G_{-\Delta_p+W}(x)=G_{-\Delta_p}(x) \asymp \begin{cases} |x|^{\frac{p-N}{p-1}} & \mbox{ if }\   p<N, \\
1 & \mbox{ if }\  p\geq N ,
 \end{cases} \quad  \ \mbox{as} \ \ |x| \to \infty \,.
 \end{align*}
In dimension $N=1$, the $p$-Laplacian $-\Delta_p$ is critical with the Agmon ground state $\Psi_{-\Delta_p}=1$ \cite{PTT}. Hence, if $u \in W^{1,p}_{\loc}(\R)$ is a bounded solution of $(-\Delta_p +V_2\mathcal{I}_p)[\varphi]=0$ in $\R$ satisfying $\liminf_{|x|\rightarrow \infty} |u(x)|=0$, then for $p \leq 2$, Theorem~\ref{Thm:Gen_quasi_Schrodinger} (by taking $\mathcal{Q}_1=-\Delta_p$ and $\mathcal{Q}_2=-\Delta_p+V_2 \mathcal{I}_p$) ensures that $u = 0$.

In the above case, if $p \geq 2$ then the assertion of Theorem \ref{Thm:Gen_quasi_Schrodinger} reduces to a trivial one due to the second assumption in \eqref{assumption_1}. Note that the same happens in the case of $2 \leq N \leq p$. 

%Note that in the case of $2 \leq N \leq p$ one may again choose $\mathcal{Q}_1=-\Delta_p$ and $\mathcal{Q}_2=-\Delta_p+V_2$. However, in this case,  the assertion of Theorem \ref{Thm:Gen_quasi_Schrodinger} reduces to a trivial one due to the second assumption in \eqref{assumption_1}.  

Now consider the case $N \geq 2$ and $2\leq p<N$. In this case, it is known that $\mathcal{Q}_1 := -\Delta_p - W_{\mathrm{opt}} \mathcal{I}_p$ is critical in $\R^N \setminus \{0\}$ with an Agmon ground state $\Psi_{\mathcal{Q}_1}=|x|^{\frac{p-N}{p}}$, where 
$$W_{\mathrm{opt}}:=\left|\frac{p-N}{p} \right|^p \frac{1}{|x|^p}$$ 
in $\R^N \setminus \{0\}$ \cite{DP}.  Hence, if $u \in W^{1,p}_{\loc}(\R^N)$ is a  solution of $(-\Delta_p +V_2)[\varphi]=0$ in $\R^N$ satisfying 
$$|u|=O(|x|^{\frac{p-N}{p}}) , \ \ \ |\nabla u| \leq C |x|^{-\frac{N}{p}} \ \ \mbox{ a.e. in } \R^N, \ \mbox{ and } \ \ \liminf_{|x|\rightarrow \bar\infty} |u(x)||x|^{\frac{N-p}{p}}=0, $$ then, by taking $\Om=\R^N \setminus \{0\}$, $\mathcal{Q}_1=-\Delta_p - W_{\mathrm{opt}}\mathcal{I}_p$ and $\mathcal{Q}_2=-\Delta_p+V_2\mathcal{I}_p$ in Theorem~\ref{Thm:Gen_quasi_Schrodinger},  we infer that $u = 0$ in $\R^N \setminus \{0\}$ and hence $u = 0$ in $\R^N $.
\end{application}  

We have observed in the above application that the assertion of Theorem \ref{Thm:Gen_quasi_Schrodinger} trivially follows from the second assumption in \eqref{assumption_1} if $1 \leq N \leq p$. In the following proposition, it is shown how to bypass the second assumption in \eqref{assumption_1} of Theorem \ref{Thm:Gen_quasi_Schrodinger}.

\begin{proposition}
Let $1 \leq N \leq p$ and $V \in L^{\infty}_{\loc}(\R^N)$ be such that $V \geq 0$ in $\R^N$. If $u \in W^{1,p}_{\loc}(\R^N)$ is a bounded solution of $(-\Delta_p+V\mathcal{I}_p)[\varphi]=0$ in $\R^N$ satisfying $u =o(1)$, then $u =0$.
\end{proposition}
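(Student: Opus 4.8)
The plan is to reduce this statement to Theorem~\ref{Thm:Gen_quasi_Schrodinger} (or to the maximal $\vge$-argument directly), in the borderline case $1\le N\le p$ where $-\Delta_p$ is critical in $\R^N$. First I would record the key structural fact: for $1\le N\le p$ the operator $-\Delta_p$ is critical in $\R^N$ with Agmon ground state $\Psi_{-\Delta_p}\equiv 1$ (cited already from \cite{PTT}). Since $V\ge 0$ in $\R^N$, the operator $-\Delta_p+V\mathcal{I}_p$ is nonnegative in $\R^N$ (indeed $1$ is a positive supersolution), and any bounded solution $u$ is an admissible competitor: with $\mathcal{Q}_1=-\Delta_p$, $A_1=A_2=I_N$, $\Psi_{\mathcal{Q}_1}=1$, assumption~(iii) of Theorem~\ref{Thm:quas_LC} reads $(\Phi_2^+)^2\le C$, which holds because $u$ is bounded, and assumption~(iv) reads $|\nabla u|^{p-2}\le C|\nabla 1|^{p-2}=0$, which is the obstruction that makes a naive application of Theorem~\ref{Thm:Gen_quasi_Schrodinger} vacuous unless $p=2$.

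So the heart of the matter is to bypass assumption~(iv) / the second condition in \eqref{assumption_1}. The natural route is the maximal $\vge$-argument used in the proof of Theorem~\ref{thm_element}, applied directly to the quasilinear operator $\mathcal{Q}:=-\Delta_p+V\mathcal{I}_p$. Assume $u\neq 0$; without loss of generality $u^+\neq 0$, and $u^+$ is a nonnegative subsolution of $\mathcal{Q}[\varphi]=0$ in $\R^N$ (positivity of $V$ is used here so that $u^+$ remains a subsolution — or one invokes the quasilinear analogue of Remark~\ref{Rmk:subsol} from \cite{PP}). The constant function $\Phi\equiv M:=\sup_{\R^N}|u|<\infty$ is a positive supersolution of $\mathcal{Q}[\varphi]=0$ (since $\mathcal{Q}[M]=V M^{p-1}\ge 0$). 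For $\vge>0$ set $\Phi_\vge:=M-\vge u^+$; for small $\vge>0$ we have $\Phi_\vge>0$ on $\R^N$ (as $\vge<1/\text{(something)}$... in fact $\Phi_\vge\ge M(1-\vge)>0$ when $\vge<1$ is too crude since $u^+\le M$, so $\Phi_\vge\ge M-\vge M=M(1-\vge)>0$ for $\vge<1$), while for large $\vge$ it fails to be positive since $u^+\not\equiv 0$. Here I would need the quasilinear comparison/strong maximum principle from \cite{PP}: $\Phi_\vge$ is a nonnegative supersolution (this uses a comparison lemma for $\mathcal{Q}$ applied to the supersolution $M$ and the subsolution $\vge u^+$, plus convexity structure of $-\Delta_p$), and one sets $\vge_M:=\sup\{\vge:\Phi_\vge>0\}\in(0,\infty)$. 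By the strong maximum principle either $\Phi_{\vge_M}>0$, contradicting maximality, or $\Phi_{\vge_M}\equiv 0$, forcing $u^+\equiv M/\vge_M$ to be a positive constant; but then $u\equiv M/\vge_M>0$ is a nontrivial global solution with $u=o(1)$, which is impossible. Hence $u=0$.

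I expect the main obstacle to be the correct quasilinear analogue of the step ``$\Phi_\vge$ is a supersolution'': for $p=2$ this is linearity, but for general $p$ one needs a lemma (available in \cite{PP}) saying that if $v$ is a positive supersolution and $w$ a nonnegative subsolution of $\mathcal{Q}[\varphi]=0$ with $w\le v$, then $v-w$ enjoys an appropriate sign/supersolution property, together with the quasilinear strong maximum principle (any nontrivial nonnegative supersolution is strictly positive, and if it touches $0$ it does so identically). The secondary subtlety is that $u^+$ is only known to be a subsolution; to run the argument one wants $\min\{u^+,M\}=u^+$ anyway, so boundedness is compatible, but one should cite the quasilinear version of Remark~\ref{Rmk:subsol}. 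Finally I would note that positivity of $V$ enters twice — to make $M$ a supersolution and to ensure $u^+$ is a subsolution — and that the condition $u=o(1)$ is exactly what rules out the ``$u$ is a ground state'' alternative, since in the critical case $1\le N\le p$ the ground state of $-\Delta_p$ (and hence, by the comparison principle, of $\mathcal{Q}$) is bounded away from zero.
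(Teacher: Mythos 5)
Your overall strategy is the paper's: subtract $\vge u^+$ from a constant positive supersolution and exploit the criticality of $-\Delta_p$ in $\R^N$ for $1\le N\le p$. However, the step you yourself single out as ``the main obstacle'' --- that $\Phi_\vge$ is a supersolution --- is left resting on ``a lemma (available in \cite{PP})'' to the effect that the difference of a positive supersolution and a nonnegative subsolution of a quasilinear equation is again a supersolution. No such general lemma exists, and the assertion is false for a nonlinear operator like $\Delta_p$. The step is valid here only because the minuend is a \emph{constant}: since $\nabla(c-\vge u^+)=-\vge\nabla u^+$, homogeneity gives the exact identity $-\Delta_p[c-\vge u^+]=\vge^{p-1}\Delta_p[u^+]$, and $\Delta_p[u^+]\ge V(u^+)^{p-1}\ge 0$ because $u^+$ is a subsolution (by the quasilinear analogue of Remark~\ref{Rmk:subsol}, i.e. \cite[Lemma~3.4]{PP}) and $V\ge 0$. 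This is precisely the computation in the paper's proof, carried out with $c=1$, the Agmon ground state of $-\Delta_p$, and it exhibits $\Phi_\vge$ as a positive supersolution of the \emph{potential-free} equation $-\Delta_p[\varphi]=0$; no convexity or comparison lemma for $\mathcal{Q}$ is needed or used.

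Your endgame also has an unjustified branch. In the maximal $\vge$-argument, the alternative $\Phi_{\vge_M}>0$ is claimed to ``contradict maximality,'' but on the unbounded domain $\R^N$ strict positivity alone gives no uniform lower bound and hence no room to increase $\vge$; even in the linear setting this branch requires an additional argument. The paper sidesteps the dichotomy altogether: since $-\Delta_p$ is critical in $\R^N$ for $N\le p$, it admits a unique positive supersolution up to a multiplicative constant, namely the ground state $\equiv 1$ (\cite[Theorem~4.15]{PP}); hence $1-\vge u^+$ is constant, so $u=u^+$ is a positive constant, contradicting $u=o(1)$ directly. With these two repairs --- replacing the nonexistent difference lemma by the exact homogeneity computation, and replacing the maximal-$\vge$ dichotomy by the uniqueness of positive supersolutions for critical operators --- your argument coincides with the paper's proof.
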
  
\begin{proof}
Assume that $u \neq 0$ in $\R^N$. Without loss of generality, assume that $u^+ \neq 0$. Then it follows from \cite[Lemma 3.4]{PP} that $u^+$ is a subsolution of $(-\Delta_p+V \mathcal{I}_p)[\varphi]=0$ in $\R^N$.  Note that $\mathcal{Q}_1:=-\Delta_p$ is critical with Agmon ground state $\Psi_{\mathcal{Q}_1}=1$ in $\R^N$. For $\vge >0$, consider $\Phi_\vge: =1 - \vge u^{+}$. By our assumption $\Phi_\vge > 0$ in $\R^N$ for small enough $\vge$. Observe that $\Phi_{\vge}$ is a positive supersolution of $-\Delta_p[\varphi]=0$ in $\R^N$. Indeed,
$$ -\Delta_p[\Phi_{\vge}]=(-\vge^{p-1})(-\Delta_p[u^+])= (-\vge^{p-1})\left((-\Delta_p+V \mathcal{I}_p)[u^+]-V\mathcal{I}_p[u^+] \right) \geq 0 .$$
%By the strong maximum principle \cite[Theorem 3.10]{PP}, either $\Phi_{\vge_M} >0$ or $\Phi_{\vge_M} =0$. The later case is not possible as $u =o(1)$. On the other hand,  $\Phi_{\vge_M} >0$ 
%This will lead to a contradiction to the maximality of $\vge_M$. 
However, since $-\Delta_p$ is critical, there exists a unique positive supersolution which is the Agmon ground state $\Psi_{\mathcal{Q}_1}=1$ (up to a positive multiplicative constant) \cite[Theorem 4.15]{PP}. This ensures that $u=u^+=C$ for some positive constant $C$, which contradicts our assumption $u=o(1)$. Therefore, $u =0$ in $\R^N$. 
\end{proof} 

%%%%%%%%%%%%%%%%%%%%
\begin{application}
In Theorem \ref{Thm:Gen_quasi_Schrodinger}, we consider $\Om=\R^N$, $A_k=I_N$ for $k=1,2$, and $V_2\leq W=1$ outside a compact set. Let $2 \leq p <N$. Due to assumption \eqref{assumption_2}, we are required getting an explicit estimate near infinity of the positive solution $G_{-\Delta_p + 1}$ of $(-\Delta_p + \mathcal{I}_p)[\varphi] =0$ of minimal growth near infinity in $\R^N$. Towards this, we take the function $$h(x):=|x|^{-\nu}{\mathrm{e}}^{-\mu |x|},$$ where $\mu, \nu >0$ satisfy $N-1-2\nu(p-1) \leq 0$ and $\nu (N-p- \nu (p-1)) \leq 0$. Then, it follows from \cite[Lemma 5.8]{Vitaly} that $h$ is a subsolution of $(-\Delta_p + \mu^p(p-1)\mathcal{I}_p)[\varphi] =0$ in $\R^N$. Further assume that $\nu > \frac{N-1}{p}$ and $\mu^p(p-1) \geq 1$. Then, using \cite[Theorem B.1 and Lemma B.2]{Vitaly} we assure that there exists a smooth compact set $K$ and $C_K>0$ such that $$G_{-\Delta_p + 1} \geq C_K h \ \ \ \mbox{in} \ \  \R^N \setminus K \,.$$

Therefore, as in Application~\ref{app_53}, if $u \in W^{1,p}_{\loc}(\R^N)$ is a  solution of $(-\Delta_p +V_2 \mathcal{I}_p)[\varphi]=0$ in $\R^N$ satisfying 
$$|u|=O(|x|^{\frac{p-N}{p}}), \  \ \ \ |\nabla u| \leq C |x|^{-\frac{N}{p}} \ \ \mbox{a.e. in }  \ \R^N, \ \ \mbox{ and } \ \  \liminf_{|x|\rightarrow \infty} \frac{|u(x)|}{h(x)}=0,$$ 
then, by taking $\mathcal{Q}_1=-\Delta_p - W_{\mathrm{opt}}\mathcal{I}_p$ and $\mathcal{Q}_2=-\Delta_p+V_2 \mathcal{I}_p$, Theorem~\ref{Thm:Gen_quasi_Schrodinger}  ensures that $u = 0$.
\end{application}
%%%%%%%%%%%%%%%%%%%%%%%%%%% 
%%%%%%%%%%%%%%%%%%%%%%%
\section{Concluding remarks}\label{sec-6}
%%%%%%%%%%%%%%%%%%%%%%%%%%
Let $\mathcal{L}$ be an elliptic operator of the form \eqref{Eq:Ellp_op}. Up to this point, we have examined 
the Landis conjecture under the assumption that the underlying operator is nonnegative in $\Om$. Now let us assume that the operator $\mathcal{L}$ is nonnegative outside a compact set $K \subset \Gw$. Under this condition, we can establish Landis-type result by exploiting a similar idea as in Application \ref{Exterior domain} for exterior domains, and an additional assumption that $\mathcal{L}$ satisfies the {\em unique continuation property} (in short, UCP) in $\Om$. {We say that  $\mathcal{L}$ satisfies UCP in $\Om$ if 
any solution $v$ of $\mathcal{L}[\varphi]=0$ in $\Om$ which vanishes on an open ball in $\Om $ vanishes in $\Gw$. For sufficient conditions for the UCP, see \cite{H}.}
\begin{theorem}
Let $\mathcal{L}$ be an elliptic operator of the form \eqref{Eq:Ellp_op} which is nonnegative in $\Om \setminus K$ for some $K \Subset \Om$, and satisfies the UCP in $\Om$.

Let $u \in W^{1,2}_{\loc}(\Om)$ be a solution of $\mathcal{L}[\varphi]=0$ in $\Om$ such that $u$ has constant sign in a neighbourhood of $\partial K$, and   
$$|u|=O(G_{\mathcal{L}}) \ \mbox{ as} \ \ x\to \bar{\infty}_\Gw \ \  \mbox{ and }  \ \ \liminf_{x\to \bar{\infty}_\Gw} \frac{|u(x)|}{G_{\mathcal{L}}^\Gw(x)} =0 \,,$$
where where $G_{\mathcal{L}}^\Gw$ is a positive solution of the equation
$\mathcal{L}[\varphi]=0$ of minimal growth at infinity in $\Gw$, and $\bar \infty_\Gw$ is the ideal point in the one-point compactification of $\Gw$.
Then $u=0$.
\end{theorem}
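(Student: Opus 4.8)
The plan is to adapt the maximal $\varepsilon$-argument of the proof of Theorem~\ref{thm_element} to the exterior subdomain $D:=\Gw\setminus\overline{K}$, on which $\mathcal{L}\geq 0$, and to use the UCP to pass from $D$ back to $\Gw$. Assume, towards a contradiction, that $u\neq 0$. First, replacing $K$ by a smooth set $\tilde K$ with $K\Subset\tilde K\Subset\Gw$ chosen so that $\overline{\tilde K}\setminus\mathrm{int}(K)$ lies inside the neighbourhood where $u$ has constant sign, we may assume (renaming $\tilde K$ as $K$) that $\partial K$ is smooth and $u$ still has constant sign near $\partial K$ and $\mathcal{L}\geq 0$ in $\Gw\setminus K$. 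After replacing $u$ by $-u$ we may assume $u\geq 0$ near $\partial K$. If $u$ vanished on some open ball, the UCP would force $u\equiv 0$ in $\Gw$; hence $u\not\equiv 0$ near $\partial K$, and the weak Harnack inequality (Remark~\ref{Rmk:subsol}) gives a connected neighbourhood $N$ of $\partial K$ with $u>0$ in $N$. Thus $u^+=u>0$ on the nonempty open set $N\setminus\overline{K}\subset D$, while $u^+$ is a subsolution of $\mathcal{L}[\varphi]=0$ in $\Gw$, hence in $D$ (Remark~\ref{Rmk:subsol}); we may also assume $D$ is connected (otherwise one argues on the unbounded component).

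Next I build the comparison function. Since $\mathcal{L}\geq 0$ in $D$, a standard exhaustion-and-comparison procedure produces a positive solution $\Phi$ of $\mathcal{L}[\varphi]=0$ in $D$ which equals $1$ on $\partial K$ and has minimal growth at $\bar\infty_{\Gw}$; in particular $\Phi$ is bounded below by a positive constant in a neighbourhood of $\partial K$, and, since any two positive solutions of minimal growth at $\bar\infty_{\Gw}$ are comparable near $\bar\infty_{\Gw}$, we have $\Phi\asymp G_{\mathcal{L}}^{\Gw}$ near $\bar\infty_{\Gw}$. Combining the hypothesis $|u|=O(G_{\mathcal{L}})$ near $\bar\infty_\Gw$ with the minimal growth of $G_{\mathcal{L}}^\Gw$ (compared against the positive supersolution $\Phi$), the local boundedness of $u^+$ and the positivity of $\Phi$ on compact subsets of $D$, and the continuous boundary behaviour of $u^+$ and $\Phi$ at $\partial K$, one gets $u^+\leq C_0\Phi$ in $D$ for some $C_0>0$. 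For $\varepsilon>0$ set $\Phi_\varepsilon:=\Phi-\varepsilon u^+$; this is a supersolution of $\mathcal{L}[\varphi]=0$ in $D$, it is positive for $0<\varepsilon<1/C_0$, and, since $u^+\not\equiv0$ in $D$, it is not positive for large $\varepsilon$. Hence $\varepsilon_M:=\sup\{\varepsilon>0:\Phi_\varepsilon>0\text{ in }D\}\in(0,\infty)$, and $\Phi_{\varepsilon_M}\geq 0$ is a supersolution in $D$.

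By the strong maximum principle (Remark~\ref{Rmk:subsol}), either $\Phi_{\varepsilon_M}\equiv 0$ in $D$ or $\Phi_{\varepsilon_M}>0$ in $D$. In the first case $\varepsilon_M u^+=\Phi>0$ in $D$, so $u=u^+>0$ in $D$; being a positive multiple of $\Phi$, which is of minimal growth, $u$ satisfies $u\geq c\,G_{\mathcal{L}}^\Gw$ near $\bar\infty_\Gw$ for some $c>0$, contradicting $\liminf_{x\to\bar\infty_\Gw}|u(x)|/G_{\mathcal{L}}^\Gw(x)=0$. In the second case one wants to contradict the maximality of $\varepsilon_M$ by showing $u^+\leq C_1\Phi_{\varepsilon_M}$ in $D$, which would give $\Phi_{\varepsilon_M}-\delta u^+>0$ for small $\delta>0$. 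Near $\bar\infty_\Gw$ this follows from $u^+\leq|u|\leq C\,G_{\mathcal{L}}^\Gw$ together with $G_{\mathcal{L}}^\Gw\leq c'\Phi_{\varepsilon_M}$ (minimal growth of $G_{\mathcal{L}}^\Gw$ against the positive supersolution $\Phi_{\varepsilon_M}$), and on compact subsets of $D$ it is immediate from the positivity and continuity of $\Phi_{\varepsilon_M}$. The delicate point — and the main obstacle — is the control of $u^+/\Phi_{\varepsilon_M}$ up to the inner boundary $\partial K$: by the constant-sign assumption $u^+=u$ is a genuine solution near $\partial K$, so $\Phi_{\varepsilon_M}=\Phi-\varepsilon_M u$ is a nonnegative solution of $\mathcal{L}[\varphi]=0$ in $D$ near $\partial K$, with boundary value $1-\varepsilon_M u$ on $\partial K$; when this boundary value is strictly positive, $\Phi_{\varepsilon_M}$ stays bounded below near $\partial K$ and we are done, while the borderline case (boundary value vanishing somewhere on $\partial K$) is the crux and must be excluded by a finer argument — e.g.\ by a more careful, adapted choice of the comparison solution $\Phi$, by the Hopf boundary point lemma on the smooth boundary $\partial K$ combined with the weak Harnack inequality, or by a local UCP argument at $\partial K$. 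Granting this, the second case contradicts the maximality of $\varepsilon_M$; hence $u=0$ in $D$, and the UCP gives $u\equiv 0$ in $\Gw$, completing the proof.
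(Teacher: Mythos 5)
Your overall strategy --- localize the maximal $\varepsilon$-argument of Theorem~\ref{thm_element} to $\Gw\setminus K$, where $\mathcal{L}\geq 0$, compare with $G_{\mathcal{L}}^{\Gw\setminus K}\asymp G_{\mathcal{L}}^{\Gw}$ near $\bar\infty_\Gw$, and finish with the UCP --- is the paper's. But as written the proof has a genuine gap, which you yourself flag as ``the crux'': in the case $\Phi_{\varepsilon_M}>0$ you cannot establish $u^{+}\leq C_1\Phi_{\varepsilon_M}$ up to the inner boundary $\partial K$, because with your normalization $u>0$ near $\partial K$ the nonnegative function $\Phi_{\varepsilon_M}=\Phi-\varepsilon_M u$ may degenerate on $\partial K$ while $u^{+}=u$ stays positive there. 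None of the proposed repairs (Hopf lemma, an adapted $\Phi$, a local UCP argument) is carried out, so the contradiction with the maximality of $\varepsilon_M$ is not obtained and the argument is incomplete.

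The difficulty is self-inflicted: you used the constant-sign hypothesis in the wrong direction. Normalize instead $u\leq 0$ in a neighbourhood of $\partial K$ (replacing $u$ by $-u$ if necessary). Then either $u\leq 0$ throughout $\Gw\setminus K$, in which case $-u$ is a nonnegative solution there, hence by the strong maximum principle either identically zero or a positive supersolution dominating $c\,G_{\mathcal{L}}^{\Gw\setminus K}\asymp G_{\mathcal{L}}^{\Gw}$ near $\bar\infty_\Gw$, which contradicts the $\liminf$ hypothesis unless $u\equiv 0$; or else $u^{+}\neq 0$ in $\Gw\setminus K$ but $u^{+}\equiv 0$ in a neighbourhood of $\partial K$. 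In the latter case the inner boundary causes no trouble at all: $\Phi_\varepsilon=\Phi-\varepsilon u^{+}$ coincides with $\Phi>0$ near $\partial K$ for every $\varepsilon$, the comparison $u^{+}\leq C\Phi_{\varepsilon_M}$ near $\partial K$ is trivial, and the alternative $\Phi_{\varepsilon_M}\equiv 0$ is impossible since it would force $\Phi=\varepsilon_M u^{+}=0$ near $\partial K$. Hence one always contradicts the maximality of $\varepsilon_M$, so $u^{+}\equiv 0$, reducing to the first case, and the UCP yields $u\equiv 0$ in $\Gw$. With this change your write-up becomes a correct rendition of the paper's proof; the elaborate positive lower bound for $\Phi_{\varepsilon_M}$ at $\partial K$ that you were seeking is never needed.
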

\begin{proof}
{Without loss of generality, let $u \leq 0$ in a neighbourhood of $\partial K$. If $u\leq 0$ in $\Om \setminus K$, then our assumption $\liminf_{x\to \bar{\infty}_\Gw} \frac{|u(x)|}{G_{\mathcal{L}}^\Gw(x)} =0$ implies that $u=0$ in $\Gw$. On the other hand, suppose that $u^+\neq 0$ in $\Om \setminus K$. Let $G_{\mathcal{L}}^{\Om \setminus K}$ be a positive solution of minimal growth at infinity in $\Om \setminus K$. Obviously, $G_{\mathcal{L}}^{\Om \setminus K} \leq C G_{\mathcal{L}}^{\Om}$ in $\Om \setminus K$ for some $C>0$ and  $G_{\mathcal{L}}^{\Om \setminus K} \asymp  G_{\mathcal{L}}^{\Om}$ as $x \to \bar\infty_\Gw$. Having observed this, one can repeat the arguments of our proof of Theorem \ref{thm_element} to infer that $u^+=u =0$ in $\Om \setminus K$. Hence, the UCP implies that $u=0$ in $\Om$.}
\end{proof}
\begin{remark}
We note that in \cite[Theorem 1.2]{Sirakov}, to establish the Landis-type theorem for a nonnegative operator $\mathcal{L}$ on an exterior domain, the authors assumed that the solution $u$ has constant sign on the boundary. In this view, while dealing with an operator $\mathcal{L}$ which is nonnegative on $\Om \setminus K$ for some $K \Subset \Om$, it seems to be natural to assume that $u$ has constant sign in a neighbourhood of the boundary of $K$.  {However, this assumption is indeed nontrivial.}
\end{remark}

Now we consider the  operator $H$ as given in \eqref{Eq:Def_Schordinger}. The assumption that $H$ is nonnegative outside a compact set $K\subset \Gw$ implies (see for example \cite[Remark~3.1]{P} and references therein) that $\gl_\infty$, the bottom of the essential spectrum of $H$, is nonnegative.  Moreover, in this case, the {\it{Morse index}} of $H$ is finite, i.e., $H$ admits at most finitely many negative eigenvalues (see \cite{D} and the references therein).  

Suppose now that $H\not \geq 0$ in $\Gw \setminus K$ for all $K$ compact set in $\Gw$.  It follows from \cite[Remark~3.1]{P} that $\gl_\infty\leq 0$. So, we are left with two cases: either
	
	(a)  $\gl_\infty  <0$, or

(b) $\gl_\infty=0$ and by \cite{D}, the Morse index of $H$ is not finite ($H$ admits infinitely many negative eigenvalues). 
%%%%%%%%%%%%%%
\begin{remark}
Consider the cases (a) or (b) and $\Gw=\R^N$.    If $|V|\leq 1$, then $-1\leq \gl_0:
=\inf \gs(H)\leq 0$, where $\gs(H)$ denotes the spectrum of $H$. A priori,  $\gl_0$ (and also $\gl_\infty$) might be negative and even take the value  $-1$ (take for example, $V=-1$). On the other hand, in the aforementioned two cases, $\gl_\infty \leq 0$, and roughly speaking, for regularly behaved potential $V$, one does not expect to find localized eigenfunctions, i.e. exponentially decaying solutions embedded in the essential spectrum \cite{Stolz}. 
 
Let us discuss briefly case (a). It is known that under certain conditions the essential spectrum of $H$ satisfies  $\gs_{\mathrm{ess}}(H)= [\lambda_\infty, \infty)$. A sufficient condition for this is that the potential $V$ approaches a constant  $c \in [-1,0]$ as $|x|\to \infty$, and in this case $\lambda_\infty=c$.  Suppose that $\sigma_\mathrm{ess}(H)= [\lambda_\infty, \infty)$, where $\lambda_\infty<0$, then there is a huge literature on the absence of eigenvalues inside the essential spectrum (non-existence of embedded eigenvalues). Again for this to hold one has to assume that $V$ satisfies some mild regularity assumptions and certain behaviour at infinity. Under this situation, the non-existence of embedded eigenvalue ensures in particular that there is no exponentially decaying solution to $H[\varphi]=0$ in $\R^N$.

 The situation when we have embedded eigenvalues needs further investigation. Also, the case (b) is left for future studies.    
\end{remark}
 \begin{example}
 	Let $V \in L^{q}_{\loc}(\R^N)$ with $q>N/2$, such that  $0\leq -V(x)\leq  C|x|^{-2+\vge} $ in $B_R^c$ with $0<\vge<2$ and $R>0$, then $\gl_\infty=0$, $\sigma_\mathrm{ess}(H)=[0,\infty)$,  and  $H$ has infinite Morse index \cite[Theorem XIII.6]{RS}. Recall that by the Cwikel-Lieb-Rozenblum (CLR) bound on the number of negative eigenvalues of a Schr\"odinger operator \cite[Theorem XIII.12]{RS}, if $H$ has  infinite Morse index and $N\geq 3$, then $V_- \not \in L^{N/2}(\R^N)$.  
 \end{example}
\appendix 
\section{Positive radial solution of $(-\Gd+1)\gf=0$ in $\R^N$}\label{Appendix} 
We show that the equation $(-\Delta +1)\gf =0$ in $\R^N$ admits a positive solution $w$ satisfying
\begin{align} \label{Eq:est10}
w \asymp \frac{{\mathrm{e}}^{|x|}}{|x|^{(N-1)/2}} \qquad \mbox{as } \ |x| \rightarrow \infty \,.
\end{align}
Indeed, a radial solution $v=v(r)$ of the above equation, should satisfy the equation
$$\frac{\mathrm{d}^2 \varphi}{\mathrm{d}r^2}+\frac{N-1}{r} \frac{\mathrm{d}\varphi}{\mathrm{d}r}-\varphi=0 \ \ \mbox{for} \ r > 0 \,.$$
It follows that a {\em regular} solution at $r=0$ of the above equation is of the form $v(r)=r^{(2-N)/2}I_{(N-2)/2}(r)$, where $I_{(N-2)/2}(r)$ is the {\it modified Bessel function} that solves
$$\frac{\mathrm{d}^2 \varphi}{\mathrm{d}r^2}+ \frac{1}{r} \frac{\mathrm{d}\varphi}{\mathrm{d}r}-\left(1+\frac{(N-2)^2}{r^2} \right)\varphi=0 \qquad \mbox{for } \ r > 0 \,.$$
Therefore, $w(x):=v(|x|)$ solves the equation $-\Delta +1 =0$ in $\R^N$. 
Now the estimate  \eqref{Eq:est10} of $w$ follows from the asymptotic of $I_{(N-2)/2}(r)\sim c_N\mathrm{e}^r/\sqrt{r}$ as $r \rightarrow \infty$.}

Similarly, the positive minimal Green function with singularity at the origin is radially symmetric and is given by $\mathcal{G}_{-\Delta+1}(x)=C_N|x|^{(2-N)/2}H^1_{(N-2)/2}(|x|)$, where $H^1_\nu$ is the Hankel function of the first kind. For a detailed discussion on this equation, we refer to \cite[p.24]{Norio_Shimakura}.
%%%%%%%%%%%%%%%%%%%%%%%%%%%%%%
\begin{center}
	{\bf Acknowledgements}
\end{center}
U.D. and Y.P.  acknowledge the support of the Israel Science Foundation (grant 637/19) founded by the
Israel Academy of Sciences and Humanities. U.D. is also supported in part by a fellowship from the Lady Davis Foundation.  Additionally, the authors acknowledge the DFG for supporting a stay of U.D. at the University of Potsdam.
%%%%%%%%%%%%%%%%%%%%%

%%%%%%%%%%%%%%%%%%%%%%%%
 %\bibliography{Ref}

\begin{thebibliography}{10}
%\bibitem{Agmon} S.~Agmon,  Bounds on exponential decay of eigenfunctions of Schr\"{o}dinger operators, in:  % Schr\"{o}dinger operators (Como, 1984), 1--38, Lecture Notes in Math., 1159, Springer, Berlin, 1985.
\bibitem{Ag} S.~Agmon, 
On positive solutions of elliptic equations with periodic coefficients in $\R^n$, spectral results and extensions to elliptic operators on Riemannian manifolds, in:  Differential equations (Birmingham, Ala., 1983), 7--17,
North-Holland Math. Stud., 92, North-Holland, Amsterdam, 1984.

\bibitem{Vitaly} W. Albalawi, C. Mercuri, and V. Moroz, Groundstate asymptotics for a class of singularly perturbed $p$-Laplacian problems in $\R^N$, {\em Annali di Matematica}, {\bf{199}}:  23--63, 2020.

\bibitem{Ancona} A.~Ancona, First eigenvalues and comparison of Green functions for elliptic operators on manifolds or domains, {\em J. Anal. Math.}, {\bf 72}: 45--92, 1997.

\bibitem{Ancona1} A.~Ancona, Some results and examples about the behavior of harmonic functions and Green functions with respect to second order elliptic operators, {\em Nagoya Math. J.}, {\bf 165}: 123--158, 2002.

\bibitem{ABG} A.~Arapostathis, A.~Biswas, and D.~Ganguly, Certain Liouville properties of
eigenfunctions of elliptic operators, {\em Trans. Amer. Math. Soc.}, {\bf 371}: 4377--4409, 2019.


\bibitem{Balch} K. Le Balch, and D. A. Souza, Quantitative unique continuation for real-valued solutions to second order elliptic equations in the plane, {\em {arXiv}}: 2401.00441, 2023.

%\bibitem{A} D. G.~Aronson, Bounds for the fundamental solution of a parabolic equation,
%{\em Bull. Amer. Math. Soc.} {\bf 73} (1967), 890--896.	

\bibitem{BK} J. Bourgain, and C. Kenig, On localization in the continuous Anderson-Bernoulli model in higher dimension, {\em Invent. Math.}, {\bf 161}: 389--426, 2005.

%\bibitem{Davey0} B.~Davey, Some quantitative unique continuation results for eigenfunctions of the
%magnetic Schr\"odinger operator, {\em Comm. Partial Differential Equations} {\bf 39} (2014), 876--945.

\bibitem{Davey3} B. Davey, On Landis’ conjecture in the plane for some equations with sign-changing potentials, {\em Rev. Mat. Iberoam.}, {\bf 36}: 1571--1596, 2020.


\bibitem{Davey6} B. Davey, On Landis’ Conjecture in the Plane for Potentials with Growth, {\em Vietnam J. Math.},  2023.

\bibitem{Davey1}  B.~Davey, C.~Kenig, and J.N. Wang, The Landis conjecture for variable coefficient second-order elliptic PDEs, {\em Trans. Amer. Math. Soc.}, {\bf 369}: 8209--8237, 2017.
  
 \bibitem{Davey2} B. Davey, C. Kenig, and J. N. Wang, On Landis’ conjecture in the plane when the potential has an exponentially decaying negative part. {\em St. Petersburg Math. J.}, {\bf 31}: 337--353, 2020.
 
 \bibitem{DKP} U.~Das, M.~Keller, and Y.~Pinchover, On Landis conjecture for positive Schr\"odinger operators on infinite graphs, in preparation. 
 
% \bibitem{Davey_4} B. Davey, B., J.-N. Wang, Landis’ conjecture for general second order elliptic equations with singular lower order terms in the plane. {\em J.~Differential Equations} 268, 977--1042, 2020.
 
\bibitem{D} B.~Devyver, On the finiteness of the Morse index for Schr\"{o}dinger
operators, {\em Manuscripta Math.}, {\bf 139}: 249--271, 2012.

\bibitem{DFP} B.~Devyver, M.~Fraas, and Y.~Pinchover, Optimal Hardy weight for second-order elliptic operator: an answer to a problem of Agmon, {\em  J. Funct. Anal.}, {\bf 266}: 4422--4489, 2014.

\bibitem{DP} B. Devyver, and Y. Pinchover, Optimal $L^p$ Hardy-type inequalities, {\em  Ann. Inst. H. Poincar\'e.
Anal. Non Lineaire}, {\bf 33}: 93--118, 2016.

\bibitem{EKPV10} L. Escauriaza, C. Kenig, G. Ponce, and L. Vega, The sharp Hardy uncertainty principle for Schr\"odinger evolutions. {\em Duke Math. J.}, {\bf 155}: 163--187, 2010.

\bibitem{EKPV16} L. Escauriaza, C. Kenig, G. Ponce, and L. Vega, Hardy uncertainty principle, convexity and parabolic evolutions. {\em Comm. Math. Phys.}, {\bf 346}: 667--678, 2016.


\bibitem{FV} A. Fern\'andez-Bertolin, and L. Vega, Uniqueness properties for discrete equations and Carleman estimates, {\em J. Funct. Anal.}, {\bf 272}: 4853--4869, 2017.

\bibitem{FRS} A. Fernández-Bertolin, L. Roncal, and D. Stan, Landis-type results for discrete equations, {\em{arXiv}}: 2401.09066, 2024.

\bibitem{GT} D.~Gilbarg, and N. S.~Trudinger, Elliptic Partial Differential Equations of Second Order, Reprint of the 1998 edition. Classics in Mathematics. Springer-Verlag, Berlin, 2001.

\bibitem{Hill} A. T. Hill, Estimates on the Green function of second-order elliptic
operators in $\R^N$. {\em Proceedings of the Royal Society of Edinburgh: Section A Mathematics},  {\bf 128}: 1033--1051, 1998.

\bibitem{H} L. H\"ormander, The Analysis of Linear Partial Differential Operators. I. Distribution Theory and
Fourier Analysis, Springer-Verlag, Berlin-New York, 1983.


  \bibitem{JLMP} P. Jaming, Y. Lyubarskii, E. Malinnikova, and K.-M. Perfekt, Uniqueness for discrete Schr\"odinger evolutions. {\em Rev. Mat. Iberoam.}, {\bf 34}: 949--966, 2018.

 \bibitem{Kenig} C. E. Kenig, Some recent quantitative unique continuation theorems, in: S\'eminaire: \'Equations  aux D\'eriv\'ees Partielles. 2005--2006, Exp. No. XX, 12 pp., S\'emin. Equ. D\'eriv. Partielles,  \'Ecole Polytech., Palaiseau, 2006. 

 \bibitem{KSW} C. E. Kenig, L. Silvestre, and J.N. Wang, On Landis’ conjecture in the plane. {\em Comm.  Partial Differential Equations}, {\bf 40}: 766--789, 2015.
 
\bibitem{KW_MRL} C. E. Kenig, and J.-N. Wang,  Quantitative uniqueness estimates for second order elliptic equations with unbounded drift. {\em Math. Res. Lett.}, {\bf{22}}: 1159--1175, 2015.

\bibitem{Kuchment} P.~Kuchment, and Y.~Pinchover, Integral representations and
Liouville theorems for solutions of periodic elliptic equations, {\em J.~Functional Analysis}, {\bf 181}
: 402--446, 2001.

\bibitem{Landis} V. A. Kondrat’ev, and E. M. Landis, Qualitative theory of second order linear partial differential equations (Russian), {\em Mat. Sb.} (N.S.), {\bf 135}: 346--360, 1988; English
transl., Math. USSR-Sb. {\bf 63}: 337--350, 1989.

\bibitem{Lamberti_Pinchover} P. D. Lamberti, and Y. Pinchover. $L^p$ Hardy inequality on $C^{1,\gamma}$ domains. {\em Ann. Sc. Norm. Super. Pisa Cl. Sci. (5)}, {\bf 19}: 1135--1159, 2019.

 \bibitem{LMNN} A. Logunov, E. Malinnikova, N. Nadirashvili, and F. Nazarov, The Landis conjecture on exponential decay, {\it{arXiv}}: 2007.07034, 2020.
 
\bibitem{LM} Y. Lyubarskii, and E. Malinnikova, Sharp uniqueness results for discrete evolutions. Non-linear partial differential equations, mathematical physics, and stochastic analysis, {\em EMS Ser. Congr. Rep., Eur. Math. Society (EMS), Z\"urich}, 423--436, 2018.

\bibitem{Meshkov} V. Z. Meshkov, On the possible rate of decay at infinity of solutions of second order partial differential equations, {\em Mat. Sb.} {\bf 182}: 364--383, 1991; translation in Math. USSR-Sb. {\bf 72}: 343--361, 1992.

\bibitem{Murata1} M.~Murata, Semismall perturbations in the Martin theory for elliptic equations, 
{\em Israel J. Math.}, {\bf 102}: 29--60, 1997.

\bibitem{Murata} M. Murata, and T. Tsuchida, Asymptotics of Green functions and Martin boundaries for elliptic operators with periodic coefficients, {\em J. Differ. Equ.}, {\bf{195}}: 82--118, 2003.

%\bibitem{M} C.~Miranda, Partial Differential Equations of Elliptic Type, Second revised edition, translated from %the Italian by Z. C, Motteler. Results in Mathematics and Related Areas, Band 2. Springer-Verlag, New %York-Berlin, 1970.

%\bibitem{PanZhang} Y. Pan, and Y. Zhang,  On solutions to $-\Delta u= V u $ near infinity, {\em ArXiv}: 2401.05242, 2024.

\bibitem{P_JDE1} Y. Pinchover, Criticality and ground states for second-order elliptic equations, {\em J. Differential Equations}, {\bf 80}: 237--250, 1989.

%\bibitem{P_JDE} Y. Pinchover, On criticality and ground states for second order elliptic equations II, {\em J. %Differential Equations}, {\bf 87}: 353--364, 1990 .

\bibitem{P_DIEQ} Y. Pinchover, On the equivalence of Green functions of second order elliptic equations in $\R^n$. {\em Differential Integral Equations}, {\bf{5}}: 481--493, 1992. 

\bibitem{P99} Y. Pinchover, Maximum and anti-maximum principles and
eigenfunctions estimates via perturbation theory of positive
solutions of elliptic equations, {\em Math. Ann.}, {\bf 314}: 555--590, 1999.

\bibitem{P07} Y. Pinchover, Topics in the theory of positive solutions of second-order elliptic and parabolic partial differential equations, in ``Spectral Theory and Mathematical Physics: A Festschrift in Honor of Barry Simon's 60th Birthday", eds. F. Gesztesy, et al., Proceedings of Symposia in Pure Mathematics {\bf{76}}: 329--356, American Mathematical Society, Providence, RI, 2007. 

\bibitem{P} Y. Pinchover, A Liouville-type theorem for Schr\"odinger operators, {\em Comm. Math. Phys.}, {\bf 272}: 75--84, 2007. 

\bibitem{PP} Y. Pinchover, and G. Psaradakis, On positive solutions of the $(p, A)$-Laplacian with a potential
in Morrey space, {\em Anal. PDE.}, {\bf 9}: 1317--1358, 2016.

\bibitem{PTT} Y. Pinchover, A. Tertikas, and K. Tintarev,  A Liouville-type theorem for the $p$-Laplacian with potential term, {\em Ann. Inst. H. Poincar\'e. Anal. Non Lineaire}, {\bf 25}: 357--368, 2008.

\bibitem{PT} Y. Pinchover, and K. Tintarev, Ground state alternative for p-Laplacian with potential term,
{\em Calc. Var. Partial Differential Equations}, {\bf 28}: 179--201, 2007.


%\bibitem{Pinsky} R.G. Pinsky, Second order elliptic operators with periodic coefficients: criticality theory,
%perturbations, and positive harmonic functions,{\em J. Funct. Anal.}, 129 (1995) 80–107.


\bibitem{Plis} A.~Pli\'s, On non-uniqueness in Cauchy problem for an elliptic second order differential equation, {\em Bull. Acad. Polon. Sci. S\'er. Sci. Math. Astronom. Phys.}, {\bf 11}: 95--100, 1963.

 
%\bibitem{Rabier} P. J. Rabier. Asymptotic Behavior of the Solutions of Linear and Quasilinear Elliptic Equations on
%$\mathbb{R}^{N}$.  {\em Trans. Amer. Math. Soc.}, 356: (5),  1889--1907, 2004. 

\bibitem{RS} M.~Reed, and B.~Simon, Methods of Modern Mathematical Physics. IV. Analysis of Operators, Academic Press, New York-London, 1978.

\bibitem{Rossi}  L. Rossi, The Landis conjecture with sharp rate of decay, {\em Indiana Univ. Math. J.}, {\bf 70}: 301--324, 2021.

\bibitem{Norio_Shimakura} N.~Shimakura, Partial Differential Operators of Elliptic Type,
Translated and revised from the 1978 Japanese original by the author, Translations of Mathematical Monographs, 99, American Mathematical Society, Providence, RI, 1992.


\bibitem{Sirakov} B. Sirakov, and P. Souplet,  The V\'azquez maximum principle and the Landis conjecture for elliptic PDE with unbounded coefficients, {\em Adv. Math.}, {\bf 387}: 107838, 2021.

\bibitem{Stolz} G.~Stolz, Localization for Schr\"odinger operators with effective barriers,
{\em J. Funct. Anal.}, {\bf 146}: 416--429, 1997.

\end{thebibliography}
 %\bibliographystyle{abbrv}
\end{document}